\Crefname{subsection}{Section}{Sections}
\Crefname{algorithm}{Algorithm}{Algorithms}
\tikzset{cross/.style={cross out, draw=black, minimum size=2*(#1-\pgflinewidth), inner sep=0pt, outer sep=0pt},
cross/.default={1pt}}
\def\mesh{{\mathbb{M}^k}}  
\DeclareRobustCommand{\esh}{\accentset{\circ}{\mathbb{R}}^n_k}
\DeclareRobustCommand{\eshzero}{\accentset{\circ}{\mathbb{R}}^n_0}
\DeclareRobustCommand{\eshkplusun}{\accentset{\circ}{\mathbb{R}}^n_{k+1}}
\def\R{{\mathbb{R}}}
\def\N{{\mathbb{N}}}
\def\F{{\mathbb{F}}}
\def\Z{{\mathbb{Z}}}
\def\K{{\mathcal{K}}}
\def\L{{\mathcal{L}}}
\def\compact{{L}}
\def\W{{\mathcal{W}}}
\def\V{{\mathbb{V}}}
\def\P{{\mathbb{P}}}
\def\deltA{{ \delta}}
\def\DeltA{{ \Delta}}
\def\increase{{\tt \small increase}\xspace}
\def\decrease{{\tt \small decrease}\xspace}
\def\increaset{{\tt  increase}}
\def\decreaset{{\tt  decrease}}
\def\ys{{y^\text{s}}}
\def\yp{{y^\text{p}}}
\def\f{{f_\Omega}}
\def\T{{T^\text{H}_{\Omega}}}
\def\v{{\hat v}}
\def\mads{\textnormal{MADS}}
\def\ads{\textnormal{ADS}}
\def\sdds{\textnormal{SDDS}}
\def\orthomads{\textnormal{OrthoMADS}}
\def\qrmads{\textnormal{QRMADS}}
\def\nunk{{n^k_\text{U}}}
\def\nunkp1{{n^{k+1}_\text{U}}}
\def\muu{{\mu}}
\def\barV{{\bar{\mathbb{V}}}}
\newcommand{\argmin}{\mathop{\mathrm{argmin}}}
\newcommand{\nomad}{{\tt NOMAD}\xspace}
\newtheorem{Def}{Definition}[section]
\newtheorem{Th}[Def]{Theorem}
\newtheorem{theorem}{Theorem}[section]
\newtheorem{lemma}[theorem]{Lemma}
\newtheorem{assumption}{Assumption}
\newtheorem{corollary}[theorem]{Corollary}
\newtheorem{proposition}[theorem]{Proposition}
\definecolor{Red}{rgb}{1,0,0}
\definecolor{Green}{rgb}{0,.6,0}
\definecolor{Blue}{rgb}{0,0,1}
\renewcommand{\Indentp}[1]{%
  \advance\leftskip by #1
  \advance\skiptext by -#1
  \advance\skiprule by #1}%
\renewcommand{\Indp}{\algocf@adjustskipindent\Indentp{\algoskipindent}}
\renewcommand{\Indm}{\algocf@adjustskipindent\Indentp{-\algoskipindent}}
\title{
    {Adaptive direct search algorithms  \\ for constrained optimization}
        \thanks{
         {GERAD}
          and D\'epartement de math\'ematiques et g\'enie industriel,
          Polytechnique Montr\'eal,
          C.P.~6079, Succ. Centre-ville,
          Montr\'eal, Qu\'ebec, Canada H3C~3A7.   }

\author{
    \href{mailto:Charles.Audet@gerad.ca}{Charles Audet}\thanks{
          \href{mailto:Charles.Audet@gerad.ca}{\url{charles.audet@polymtl.ca}} \hfill \href{https://orcid.org/0000-0002-3043-5393}{https://orcid.org/0000-0002-3043-5393}
  }
  \and
        \href{mailto:theo-2.denorme@polymtl.ca}{Th\'eo Denorme}\thanks{
                  \href{mailto:theo.denorme@polymtl.ca}{\url{theo.denorme@polymtl.ca} } \hfill \href{https://orcid.org/0009-0008-5947-4071}{https://orcid.org/0009-0008-5947-4071}
     } 
     \and \href{mailto:youssef.diouane@polymtl.ca}{Youssef Diouane}\thanks{
                  \href{mailto:youssef.diouane@polymtl.ca}{\url{youssef.diouane@polymtl.ca}} \hfill \href{https://orcid.org/0000-0002-6609-7330}{https://orcid.org/0000-0002-6609-7330}
    } \and \href{mailto:ledigabel.sebastien@polymtl.ca}{S\'ebastien Le~Digabel}\thanks{
                  \href{mailto:ledigabel.sebastien@polymtl.ca}{\url{sebastien.le-digabel@polymtl.ca}} \hfill \href{https://orcid.org/0000-0003-3148-5090}{https://orcid.org/0000-0003-3148-5090}
     } \and \href{mailto:ledigabel.sebastien@polymtl.ca}{Christophe Tribes}\thanks{
                  \href{mailto:tribes.christophe@polymtl.ca}{\url{christophe.tribes@polymtl.ca}} \hfill \href{https://orcid.org/0000-0002-8740-6155}{https://orcid.org/0000-0002-8740-6155}
     } 
}}
\begin{document}
\maketitle
\[\textbf{Abstract}\]

Two families of directional direct search methods have emerged in derivative-free and blackbox optimization (DFO and BBO), each based on distinct principles: Mesh Adaptive Direct Search (MADS) and Sufficient Decrease Direct Search (SDDS). MADS restricts trial points to a mesh and accepts any improvement, ensuring none are missed, but at the cost of restraining the placement of trial points. SDDS allows greater freedom by evaluating points anywhere in the space, but accepts only those yielding a sufficient decrease in the objective function value, which may lead to discarding improving points.

This work introduces a new class of methods, Adaptive Direct Search (ADS), which uses a novel acceptance rule based on the so-called punctured space, avoiding both meshes and sufficient decrease conditions. ADS enables flexible search while addressing the limitations of MADS and SDDS, and retains the theoretical foundations of directional direct search. Computational results in constrained and unconstrained settings highlight its performance compared to both MADS and SDDS.

\noindent
{\bf Keywords:}
Derivative-Free Optimization;
Blackbox Optimization;  
Mesh Adaptive Direct Search;
Sufficient Decrease Direct Search;
Adaptive Direct Search. \\

\noindent
{\bf AMS subject classifications:} 90C30, 90C56, 49J52.
\newpage

\section{Introduction}

Constrained optimization problems are essential in many areas of science, engineering, and industry. These problems involve minimizing an objective function $f: \Omega \rightarrow \R \cup \{\infty\}$ over a set of feasible solutions $\Omega \subseteq \R^n$, i.e.,
\begin{eqnarray}
      \min_{x \in \Omega} ~  f(x).
 \label{eq-pb}
\end{eqnarray}

Blackbox optimization problems arise when the objective and/or the constraints are not known analytically and can only be evaluated through a computer simulation. Such simulations are often computationally expensive and may yield noisy or nonsmooth estimates. In this context, Derivative-Free Optimization (DFO) methods have been developed to address these challenges. Comprehensive treatments of DFO methods and their applications can be found in textbooks~\cite{AuHa2017,CoScVibook}, as well as in surveys~\cite{Audet2014a,LaMeWi2019,CuScVi2017,DzRiRoZe2025,KoLeTo03a}. 
Over the last two decades, numerous engineering applications have demonstrated the efficiency of DFO methods~\cite{AlAuGhKoLed2020,ROeuvray_MBierlaire_2007b,Tfaily_Diouane_et_2024}.

Direct Search (DS) methods are a class of DFO algorithms designed specifically for blackbox optimization problems.
These iterative methods propose\textit{ trial points} to evaluate the objective function based on previously gathered information. 
A small sampling of such methods can be found in~\cite{AuDe2006,cmaes,JoPeSt93a,Wright2012}.
Unlike gradient-based methods, direct search methods do not require derivative information, making them suitable for nonsmooth or discontinuous problems.
Directional Direct Search (DDS) methods form a prominent class of direct search algorithms specifically designed for derivative-free optimization (DFO) in blackbox settings. These iterative methods evaluating the objective function at trial points along selected directions around a current iterate.
Comprehensive overviews and numerous applications of such methods are extensively discussed in~\cite{AlAuGhKoLed2020,Audet2014a,AuHa2017,CoScVibook,CuScVi2017,KoLeTo03a,LaMeWi2019}. DDS methods traditionally fall into two major categories: \textit{mesh-based} methods, and methods based on the \textit{sufficient decrease} condition. 

Mesh-based algorithms, such as Generalized Pattern Search (GPS)~\cite{Torc97a} and Mesh Adaptive Direct Search (\mads{})~\cite{AuDe2006}, discretize the search space into a structured mesh. All trial points are restricted to this mesh, and progress is determined by a \textit{simple decrease}, wherein a point is accepted if it provides any improvement over the current best solution. This structured approach simplifies theoretical convergence proofs but limits flexibility in choosing trial points, potentially hindering the exploitation of promising search directions. 

Conversely, sufficient decrease methods, exemplified by CS-DFN~\citep{FaLiLuRi2014},  TREGO~\cite{diouane2022trego}, line search algorithms~\cite{DeGaGr84a} or more generally the GSS methods~\citep{KoLeTo03a}
do not restrict points to a mesh but impose a stricter acceptance criterion. A trial point is only accepted if it yields a significant reduction of the objective function value, quantified by a strictly positive threshold that gradually decreases throughout the optimization process. Although offering greater freedom in trial point selection, this criterion may lead to many evaluations being rejected if they do not sufficiently improve the objective, thus potentially slowing convergence.

This paper introduces the Adaptive Direct Search (ADS) algorithm, a new DDS approach designed to bridge the gap between these two families. 
ADS combines the flexibility of sufficient decrease methods with the simplicity and effectiveness of the simple decrease criterion used in mesh-based approaches. 
Rather than relying on a fixed mesh, \ads{} introduces a dynamically adaptive structure to validate trial points, removing the stringent requirement of sufficient decrease yet maintaining rigorous convergence guarantees. This approach facilitates a more effective exploration of the search space, efficiently exploiting even modest improvements and reducing the number of wasted evaluations. Additionally, \ads{} provides a practical mechanism allowing the acceptance of a search point without imposing a sufficient decrease if this point is the best-known solution, significantly enhancing algorithmic efficiency in real-world scenarios. The resulting flexibility and efficiency make \ads{} particularly well-suited for challenging optimization problems. Furthermore, \ads{} efficiently addresses nonlinear constraints through the \textit{extreme barrier} approach previously utilized by \mads{}~\cite{AuDe2006}. 
This strategy systematically excludes infeasible points by introducing a barrier function $f_\Omega$, which equals the original objective within the feasible region and is infinite elsewhere. 
This approach ensures convergence results under mild assumptions about the structure of the feasible domain.

The manuscript is structured as follows: \Cref{sec-dds} illustrates limitations of mesh and sufficient decrease based methods, then \Cref{sec-ADS} presents the \ads{} algorithm, \Cref{sec-analysis} provides a theoretical analysis of \ads{}, and \Cref{sec-Numerical} demonstrates that the proposed method performs well in practice, outperforming or matching the effectiveness of mesh-based and sufficient decrease-based methods.
 
\noindent

\section{DDS methods }
\label{sec-dds}
In the context of DDS methods, the simplest strategy to handle constraints uses the extreme barrier~\cite{AuDe2006}.
It consists in solving the unconstrained minimization problem
$$ \min_{x \in \Omega} ~ f_\Omega(x) \qquad \mbox{ where } \qquad
	f_\Omega(x) \ = \left\{ \begin{array}{ll} 
		f(x) & \mbox{ if } x \in \Omega \\
		\infty & \mbox{ otherwise.}
	\end{array}\right.$$

DDS methods generally fall into two main categories: mesh-based methods and sufficient decrease-based methods.
\subsection{Mesh-based direct search}
The first mesh-based approach may be traced to the coordinate search algorithm~\cite{FeMe1952} from 1952 for unconstrained optimization.
One of its descendant is the \mads{}~\cite{AuDe2006} algorithm,
 introduced in 2006 to solve~\eqref{eq-pb}.
No assumption is made on $f$ or on $\Omega$, as $f$ and the functions defining $\Omega$ are typically evaluated by launching a computer simulation.
\mads{} using the extreme barrier is an iterative algorithm, initialized with a starting point $\bar x^0 \in \Omega$ 
	for which $f_{\Omega}(\bar x^0)$ is finite.
The goal of each iteration (indexed by $k$) is to identify a trial point $t$ whose objective function value 
	$f_\Omega(t)$ is strictly less than that of the current incumbent solution $\bar x^k$.\footnote{The notation without a bar (e.g., $x^k$) is reserved for the description of the \ads{} algorithm -- 
a distinction that becomes relevant in \Cref{sec-analysis}.}
With \mads{}, the trial points are required to be chosen from a discretized subset of $\R^n$. 
\begin{Def}
\label{def:mesh}
    At each iteration $k$ of \mads{}, the mesh $\mesh$ is defined by
\begin{equation*}
    \mesh := \left\{ \bar x^k + \bar\deltA^k GZ y : y \in \mathbb{N}^p \right\}
\end{equation*}
where $G \in \mathbb{R}^{n \times n}$ is a non-singular matrix, $Z \in \mathbb{Z}^{n \times p}$ (such that the columns of $D=GZ $ form a positive spanning set of directions~\cite{Davi54b}),
$\bar x^k$ is \textit{the incumbent solution} and $\bar\deltA^k$ is the current mesh size parameter. 
\end{Def}
All points at which $f_\Omega$ are evaluated need to be located on the mesh $\mesh$ whose coarseness is dictated by
 the mesh size parameter $\bar \deltA^k$.
Each iteration of \mads{} is composed of two main steps.
\begin{itemize}
\item The search step produces a finite set of points denoted by $\mathbb{S}^k$.
In practice, this first step is crucial for the practical efficiency of the algorithm.
\item The poll step is mandatory and produces a finite number of trial points 
	in a region defined by the frame size parameter $\bar\DeltA^k$ and the incumbent solution $\bar x^k$.
	The set of poll points is denoted by $\bar\P^k$.
	This second step ensures the theoretical convergence of the method.
	\end{itemize}

In the situation where no feasible point $\bar x^0$ is known, 
	a two-phase approach~\cite{AuHa2017} may be applied through a nonnegative  constraint violation function $h$ satisfying $h(x) = 0 \Leftrightarrow x \in \Omega$~\cite{FlLe02a}.
The first phase consists in minimizing $h$ from $\bar x^0\notin \Omega$ to generate a feasible point in $\Omega$ to be used as the starting point for the second phase of minimizing $f_\Omega$.
In~\cite{AbAuDeLe09}, the poll directions are positive spanning
	and deterministically obtained using an orthogonal Householder matrix.
In~\cite{VDAs2013},
    these directions are generated using a QR decomposition.
The requirement that all trial points are generated on the mesh $\mesh$ ensures that trial points are not generated arbitrarily close to each other.

\subsection{Sufficient decrease based direct search}
Imposing trial points to be located on the mesh is not the only way to ensure that trial points are not arbitrarily close to each other.
\citet{SLucidi_MSciandrone_2002} 
	introduce a forcing function in the context of {\em pattern search} and {\em line search} methods.
In a detailed survey,~\citet{KoLeTo03a}
	describe forcing functions in the context of {\em generating set search} methods.
	
The forcing function $\rho: \R_+ \rightarrow \R_+$ is continuous, strictly positive, non-decreasing and satisfies
$$ \lim_{t \searrow 0^+} \ \frac{\rho(t)}t  \ = \ 0.$$
The function $\rho(t) = \gamma \, t^2$, where $\gamma=10^{-2}$, is frequently used.

\begin{Def}
For any algorithm using a sufficient decrease criterion, a trial point $y \in \Omega$ is said to satisfy a {\em sufficient decrease condition}
	with $\alpha \in \R_+$ at iteration $k$ if
	$$f_{\Omega}(y) \ < \ f(\bar x^k) - \rho(\alpha)$$
	where $f(\bar x^k)$ is the current best-known objective function value and $\rho$ is a forcing function.
\end{Def}

In sufficient decrease methods, a trial point $y \in \Omega$
is accepted as the next incumbent solution at iteration $k$
provided that it satisfies the sufficient decrease condition for some stepwise parameter $\alpha^k \in \R_+$.
This condition ensures that successive best objective function values are not arbitrarily close to each other.
It reflects the same underlying idea as in \mads{}, where points are kept sufficiently separated in the space of variables,
however, in sufficient decrease methods, the separation is enforced in the space of objective function values instead. 
 The use of a sufficient decrease condition also allows worst-case complexity analyses of direct search methods~\cite{BriKiLiLu2024,GaVi2013,LiuLu2025,Vicente2013}.
In the present work, a basic framework of the Sufficient Decrease Direct Search (SDDS) method is considered,
consisting of a search step and a poll step, as defined for \mads{}.
The main distinction is that, instead of imposing that trial points belong to a mesh, a sufficient decrease condition is enforced.

\subsection{Limitations of existing DDS methods}
\label{subsec:limitations}

To highlight the differences between sufficient decrease based and mesh-based direct search approaches, this subsection applies \mads{} and \sdds{} on two simple one-dimen\-sional examples. 
These cases demonstrate scenarios where either sufficient decrease or mesh-based mechanisms can hinder convergence efficiency, whereas \ads{} avoids such limitations.

Consider the polynomial quasi-convex function
\[
f_1(x) \ = \ \gamma(x+2)x^5,
\]
with a saddle point at \( x = 0 \) and unique global minimum at \( x = -\frac{5}{3} \) and $\gamma>0$. 
Both algorithms are initialized at \( \bar x^0 = 1 \) with initial step/frame size parameter \( \bar\Delta^0 = 0.5 \), and with no search step. 
In this setting, a \sdds{} with $\rho(t)=\gamma t^2$ fails to escape the saddle point at~$0$ (the parameter $\gamma$ in the objective function is identical to the sufficient decrease constant). 
Indeed, small steps near \( x=0 \) may produce decreases too small to satisfy the forcing function condition. In contrast, \mads{} accepts any point that strictly improves the objective function, regardless of the magnitude of the decrease. 
Consequently, \mads{} successfully navigates through the plateau and converges to the global minimizer at \( x = -\frac{5}{3} \).
More generally, \mads{} cannot converge to a saddle point when the objective function is twice strictly differentiable and the domain is convex~\cite{AbAu06}.

The two first plots of Figure~\ref{fig:sufficientcounter} show all function evaluations (with $\gamma=10^{-2}$).
The crosses correspond to points that were not accepted as best points, and the circles are the successful points. 
The plot on the right gives the incumbent function value in terms of function evaluations.

\begin{figure}[!ht]
    \centering
    \includegraphics[width=1\linewidth]{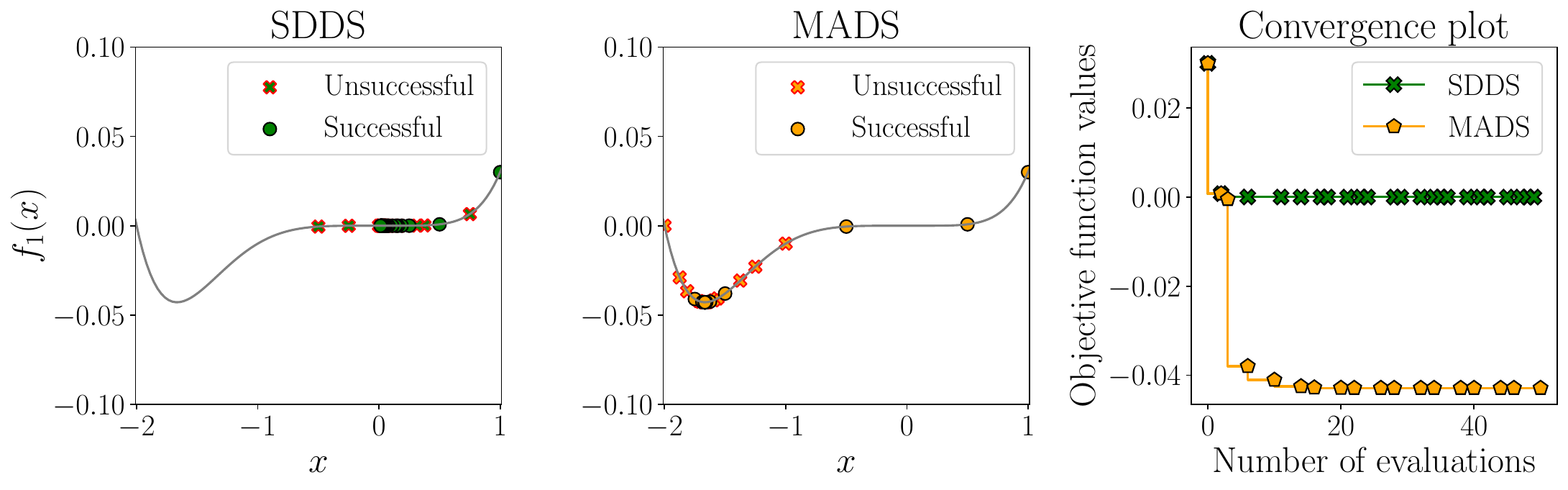}
    \caption{Representation of evaluations for each algorithm and the associated convergence plot.}
    \label{fig:sufficientcounter}
\end{figure}

Next, consider the convex quadratic function
\[
f_2(x) \ = \ \left(x - \tfrac{1}{3}\right)^2,
\]
with initial point \( \bar x^0 = 1 \),  initial step/frame size parameter \( \bar\Delta^0 = 1 \), and a quadratic model search step. 
After exactly three evaluations, the quadratic model accurately predicts the global minimum at \( x = \frac{1}{3} \). 
\sdds{}, which allows trial points to be placed anywhere in the domain, immediately accepts this prediction and finds the global minimizer at the fourth evaluation. 
However, \mads{} constrains all trial points to lie on a mesh. 
The predicted point is projected onto the mesh before evaluation, delaying convergence.
The projection depends on the value of the mesh size parameter. 
This results in several wasted evaluations as the algorithm gradually refines the mesh to reach the optimal region.

Once again, the two first plots of Figure~\ref{fig:madscounter} show all function evaluations (with $\gamma=10^{-2}$) for both algorithms.
These two plots are difficult to distinguish.
However, the plot on the right shows that \sdds{} 
 produces the global optimal solution at the fourth evaluation, 
 but \mads{} requires much more evaluations to approach it (notice the logarithmic scale on the $y$-axis).

\begin{figure}[!ht]
    \centering
    \includegraphics[width=1\linewidth]{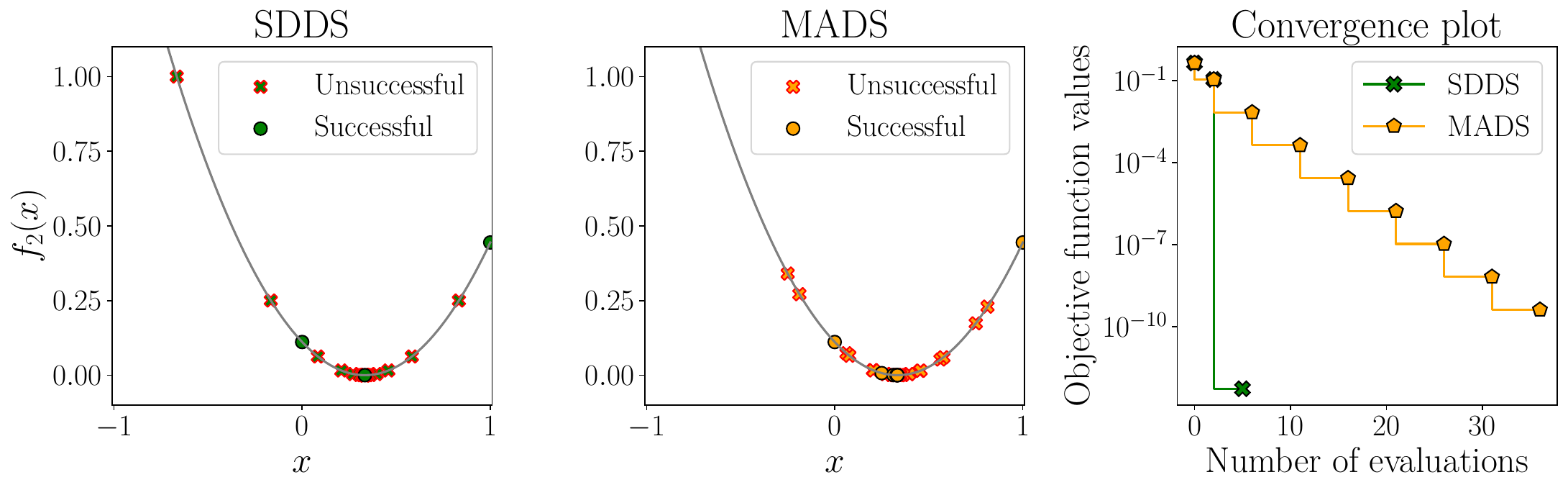}
    \caption{Representation of evaluations for each algorithm and the associated convergence plot.}
    \label{fig:madscounter}
\end{figure}
These two examples highlight the motivation for \ads{}: to retain the benefits of both sufficient decrease and mesh-based strategies while avoiding their drawbacks.

\section{ADS for constrained optimization}
\label{sec-ADS}

This section describes the \ads{} framework. 
\ads{} is a family of algorithms, as numerous distinct instances can be constructed through specific choices of parameters and strategies during either the search or the poll steps. 
In this work, a general formulation of the framework is presented, encompassing the essential structure and components common to all such instances.
The algorithm follows the structure of a DDS method and is composed of two main steps: the search and the poll, which are described in detail below.
At iteration $k$ of \ads{}, the algorithm evaluates the objective function at trial points and updates the current solution based on the gathered information.

The set $\V^k$ denotes the set of all trial points at which $\f$ was evaluated, i.e., \textit{the set of all visited points}, up to the start of iteration $k$. 
The point $x^k$, known as the \textit{incumbent solution},
    is the point of $\V^k$ with the lowest objective function value.  
The incumbent solution $x^k$ is not updated during iteration $k$. 

The search constitutes the first step of each iteration, where trial points are generated to explore the search space. 
At iteration $k$, the \textit{search set} $\mathbb{S}^k$ is a finite set of \textit{search points} of $\R^n$, which may be empty.
The location of these points is completely flexible as they can be generated anywhere in $\R^n$, for example, by minimizing interpolation models over the previously evaluated points~\cite{CoLed2011}, or by using heuristic strategies~\cite{AuTr2018,AuBeLe08}.
Once that $\mathbb{S}^k$ is constructed, the process of evaluating the barrier function $\f$ at the search points is initiated. 
The search step can be opportunistic, meaning that it terminates as soon as a new best point (in terms of $\f$) is found.

The poll step consists of evaluating $\f$ in a finite {\em poll set} $\mathbb{P}^k$, centered around the {\em poll center} $p^k$, defined as the best point identified thus far in terms of the objective function $\f$.
The poll center may differ from the incumbent solution $x^k$,
    which was fixed at the end of the previous iteration.

\subsection{The punctured space}
With  \ads{}, a trial point is said to be {\em improving} if it satisfies a simple decrease condition, meaning that its objective function value is strictly less than $f(x^k)$. 
In the search step, an improving point leads to a successful search unless it is too close to previously evaluated points. 
The proximity is measured using an \textit{exclusion size parameter}, denoted by $\deltA^k\in \mathbb{R}_+^*$. The parameter $\deltA^k$ determines how close to existing visited points a new trial point can be and can trigger the transition to another step and further evaluations. The exclusion size parameter $\deltA^k$ tends to decrease with the iteration number $k$.   
The parameter $\deltA^k$ also serves to measure the optimization progress and can be used to set a termination criterion. 
\Cref{def:esh} formally defines the region that excludes balls of radius $\deltA^k$ around all the points in $\V^k$. 

\begin{Def}\label{def:esh}
At iteration $k$ of \Cref{algo-ads},  the {\em punctured space} $\esh$ is defined as the set of all points in $\R^n$ that are not within the exclusion region of size $\deltA^k$ around the points in $\V^k$, i.e.,
\begin{equation*}\label{eq:esh}
\esh \ := \ \left\{ x \in \R^n : \Vert x-y\Vert \geq \deltA^k \mbox{ for all } y \in \V^k \right\}. 
\end{equation*}
\end{Def}

The shaded regions of \Cref{fig:MADS_ADS} illustrate for three different values of $\delta^k$ the punctured spaces in $\mathbb{R}^2$ using the $\ell_2$-norm. 
The black points represent the visited points of $\V^k$.
The punctured space grows as $\delta^k$ decreases, 
    allowing for greater exploration of the variable space.

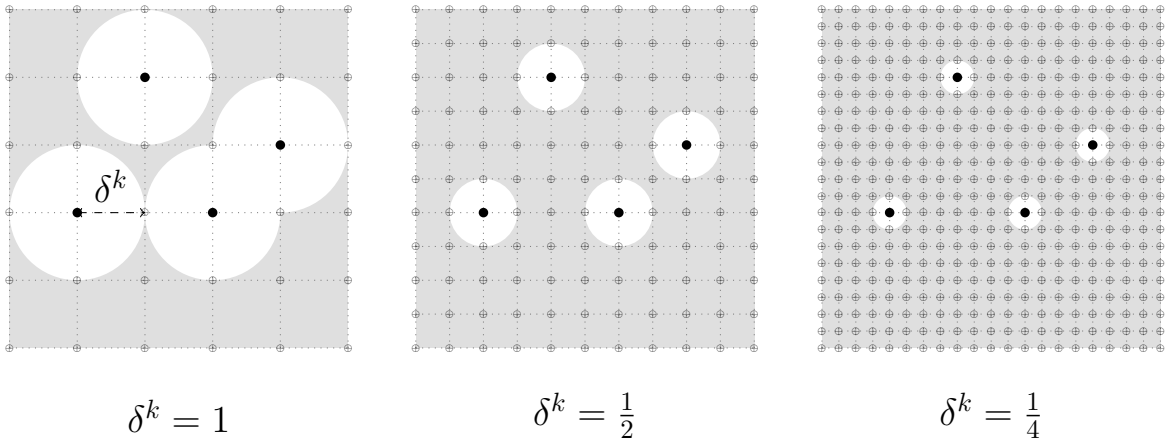
\begin{figure}[htb!]
\definecolor{mygreen}{RGB}{255,255,255}
\definecolor{mygreen2}{RGB}{0,0,0}
\centering
\begin{tikzpicture}[scale=0.9]

\foreach \i/\r in {1/1, 2/0.5, 3/0.25} {
    \begin{scope}[xshift=6*\i cm]

        \draw[gray, dotted] (0,0) grid[step=\r] (5,5);

        \foreach \x in {0,\r,...,5}
            \foreach \y in {0,\r,...,5}
                \node[scale=0.35] at (\x,\y) {\textcolor{black}{\textbf{$\oplus$}}};

        \foreach \x/\y in {1/2, 4/3, 3/2, 2/4} {
            \fill[black] (\x,\y) circle (2pt);
        }

        \begin{scope}[on background layer]
            \fill[gray!25] (0,0) rectangle (5,5);
            \foreach \x/\y in {1/2, 4/3, 3/2, 2/4} {
                \fill[white] (\x,\y) circle (\r);
            }
        \end{scope}
    \end{scope}
}

\begin{scope}[xshift=6 cm]
    \draw[<->, dashed] (1,2) -- (2,2) node[midway, above] {\large $\deltA^k$};
\end{scope}

\foreach \i/\label in {1/{\deltA^k = 1}, 2/{\deltA^k = \frac{1}{2}}, 3/{\deltA^k = \frac{1}{4}}} {
    \node at (6*\i+2.5, -1) {\large $\label$};
}
\end{tikzpicture}
\caption{Illustration in $\mathbb{R}^2$ of the punctured space $\esh$  with $\ell_2$-norm (see the region in gray) for three different values of $\delta^k$ but with the same set $\V^k$ of four points.}
\label{fig:MADS_ADS}
\end{figure}

The punctured space replaces the mesh used by the \mads{} algorithm.  Both the punctured space and the mesh ensure that trial points are not to close to the visited points of $\V^k$.
The small symbols \raisebox{1pt}{\scriptsize $\oplus$} in \Cref{fig:MADS_ADS} represent the mesh points of the \mads{} algorithm, and will be discussed in Section~\ref{subsection:mads:description}.
In the Figure, all mesh points are contained in the punctured space or in the set of visited points.
A similar set is introduced in the demonstration of Theorem~3.3 of~\cite{CuMa2015}, but was used for another purpose. 
The idea that points should not be close to each other was used in~\cite{AuBouBou2024} with the notion of revealing poll.

\subsection{The \ads{} framework}

The pseudo-code of \ads{} with opportunistic setting is presented in~\Cref{algo-ads}. In addition of this pseudo-code, a schematic version is presented in~\Cref{fig:overview:ADS}.

  \begin{algorithm}[htb!]
    \label[algorithm]{algo-ads}
    \caption{ The Adaptive Direct Search (ADS) framework with opportunistic setting.}
    \textbf{Initialization:} \\
    \begin{tabular}[t]{ll}
    $x^0 \in \Omega,\DeltA^0 \in \R^*_+,\deltA^0\in \R^*_+$ such that $\deltA^0 \leq \DeltA^0 \quad$ &  initial parameters  \\
     $\V^0 \gets \{x^0\}$ \qquad & initial set of visited points \\
     $ \eshzero \gets \left\{ x\in \R^n:\|x-x^0\|\geq \deltA^0\right \}$ \qquad & initial punctured space\\
    \end{tabular}
    \newline

    \For{$k=0,1,2,\ldots$}{
        ~ \\
\textbf{1. Search step:} Define the search set $\mathbb{S}^k$ and set $\mathbb{S}^k_{\text{eval}}=\emptyset$ \\ \Indp
\For{$\ys \in \mathbb{S}^k$}{
    $\mathbb{S}^k_{\text{eval}} \gets \mathbb{S}^k_{\text{eval}} \cup \{\ys\}$ \\
    \If{$\f(\ys) < f(x^k)$ }{
        \If{$\ys \in \esh$}{
            Declare the search step successful at $\ys$ and go to the update step}
        \Else{ Declare the search step improving at $\ys$ and go to the poll step}
    }
}
Declare the search step unsuccessful and go to the poll step\\~\\
\Indm
        \textbf{2. Poll step:} 
        Define the poll set $\P^k$ around $p^k\leftarrow \begin{cases}
            \ys &\text{if the search step is improving} \\x^k & \text{if it is unsuccessful}
        \end{cases}$ \\
        \hspace*{5mm} and set $\P^k_{\text{eval}} = \emptyset$ \\ \Indp
        \For{$\yp \in \P^k \cap \esh$}{
        $\P^k_{\text{eval}} \gets \P^k_{\text{eval}}\cup \{\yp\}$\\
        \If{ $\f(\yp) < f(p^k)$ }{
        Declare the poll step successful at $\yp$ and go to the update step}
        \quad \\ } 
        Declare the poll step unsuccessful and go to the update step\\~\\
        \Indm
        \textbf{3. Update step:} \\ \Indp
        \If{\text{\em the search step is successful at } $\ys$}{
        Set $x^{k+1} \leftarrow \ys$ and $(\DeltA^{k+1},\deltA^{k+1}) \leftarrow$ \increase$(\DeltA^k,\deltA^k)$}
        \If{\em the poll step is successful at $\yp$}{
        Set $x^{k+1} \leftarrow \yp$ and $(\DeltA^{k+1},\deltA^{k+1}) \leftarrow$ \increase$(\DeltA^k,\deltA^k)$}
        \Else{
        Set $x^{k+1} \leftarrow p^k$ and $(\DeltA^{k+1},\deltA^{k+1}) \leftarrow$ \decrease$(\DeltA^k,\deltA^k)$}
       $\V^{k+1}\gets \V^k\cup \mathbb{S}^k_{\text{eval}} \cup  \mathbb{P}^k_{\text{eval}} $ and $ \eshkplusun \gets \left\{ x\in \R^n:\|x-y\|\geq \deltA^{k+1} , y\in \V^{k+1} \right \} $. 
        }\Indm
    \end{algorithm}

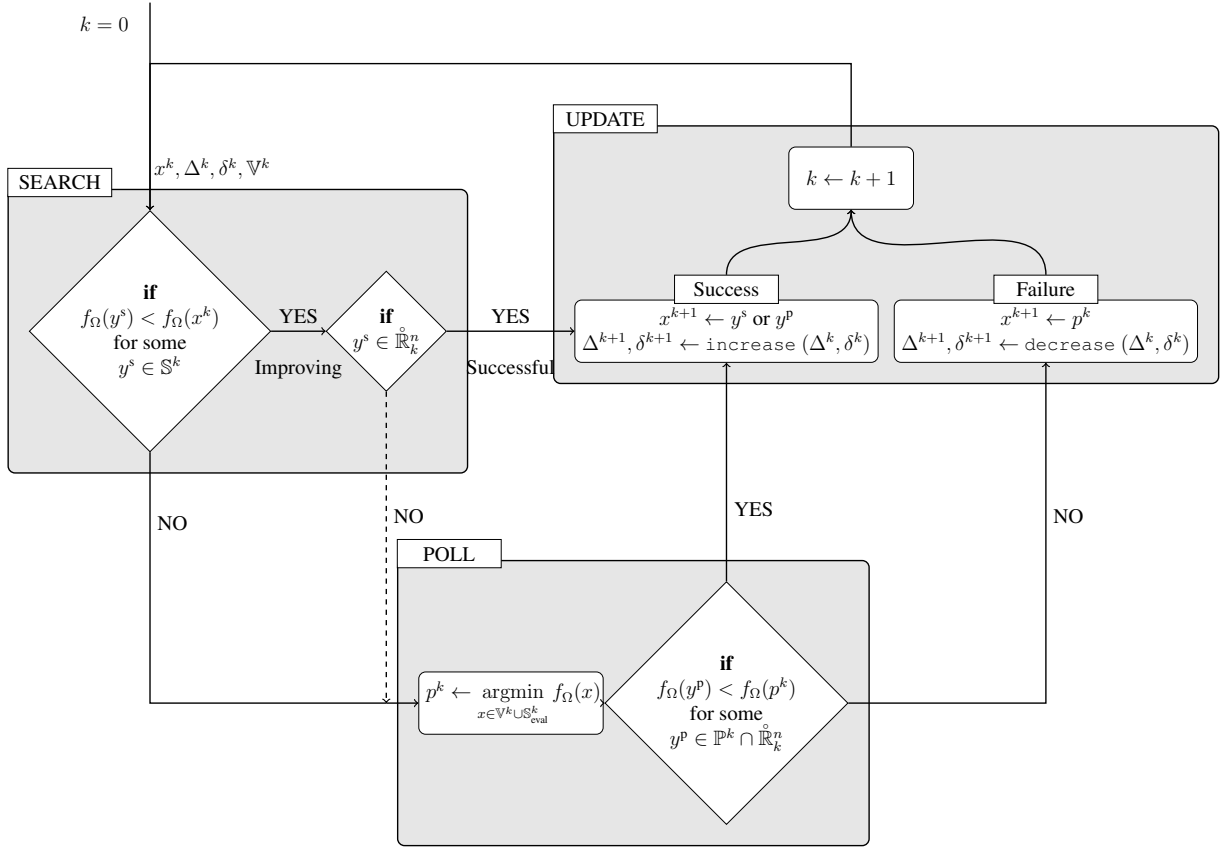
\begin{figure}[!ht]

\usetikzlibrary{positioning}
\tikzstyle{block} = [rectangle, draw, text centered, rounded corners, minimum height=3em, minimum width=6em,fill=white, align=center,inner sep=0.1em]
\tikzstyle{block2} = [rectangle, draw, text centered, rounded corners, minimum height=3em, minimum width=7em,fill=white, align=center,inner sep=0.1em]
\tikzstyle{decision} = [diamond, minimum width=1em, minimum height=1em, text centered, draw=black,fill = white, align=center,inner sep=0.1em]
\tikzstyle{arrow} = [->,thick]
\tikzstyle{group} = [draw, thick, fill=gray!20, rounded corners, inner sep=1em, text centered]
\tikzstyle{grouplabel} = [rectangle, draw, fill=white, minimum width=5em, anchor=north west]
\tikzstyle{grouplabelUpdate} = [rectangle, draw, fill=white, minimum width=5em, anchor=north]

\begin{center}
\resizebox{.98\textwidth}{!}{%
\begin{tikzpicture}

    \node (simpleSearch) [decision] {\textbf{if} \\$\f(\ys)<\f(x^k)$ \\ for some \\ $\ys \in \mathbb{S}^k$};

    \node (exclusionSearch) [decision, right of=simpleSearch,xshift=9em] {\textbf{if} \\$\ys \in \esh$ };

    \coordinate (joint) at ($(exclusionSearch.south)+(0,-15em)$);

    \draw [arrow] (simpleSearch) -- node[midway, above]{YES} node[midway, below,yshift = -1em] {Improving} (exclusionSearch);

    \draw [arrow,dashed] (exclusionSearch) --node[pos=0.4, right] {NO} (joint);
    
    \begin{pgfonlayer}{background}
    \node (search_group) [group, fit=(simpleSearch)(exclusionSearch)] {};
    \end{pgfonlayer}
    \node at ([yshift=1em]search_group.north west) [grouplabel] {SEARCH};


    \draw [-,thick] (simpleSearch.south) -- ++(0,0) |- node[pos=0.1405, right] {NO}(joint);

    \node (simplePoll) [decision,right of=joint,xshift=14em] {\textbf{if}\\$\f(\yp)<\f(p^k)$ \\for some\\ $\yp \in \P^k \cap \esh$};

    \node (computepk) [block, left of=simplePoll,xshift=-8em] { \text{ }$ p^k \leftarrow \displaystyle\argmin_{x\in \V^k\cup \mathbb{S}_{\text{eval}}^k}\f(x)$};

    \draw [arrow] (joint) -- (computepk);
    \draw [arrow] (computepk) -- (simplePoll);

    \begin{pgfonlayer}{background}
    \node (poll_group) [group, fit=(simplePoll)(computepk)] {};
    \end{pgfonlayer}
    \node at ([yshift=1em]poll_group.north west) [grouplabel] {POLL};

    \node (updateSuccess) [block2, right of=exclusionSearch,xshift=14em] { $x^{k+1} \leftarrow \ys$ or $\yp$\\~$\DeltA^{k+1},\deltA^{k+1} \leftarrow$ \increase$(\DeltA^{k},\deltA^{k})$~~};
    \node (labelSuccess) at ([yshift=1.2em]updateSuccess.north) [grouplabelUpdate] {Success};

    \node(updateFailure) [block2, right of=updateSuccess,xshift=13em] { $x^{k+1} \leftarrow p^k$ \\~$\DeltA^{k+1},\deltA^{k+1} \leftarrow$ \decrease$(\DeltA^{k},\deltA^k)$~~};
    
    \node (labelFailure) at ([yshift=1.2em]updateFailure.north) [grouplabelUpdate] {Failure};

     \node (updateK) [block, above of=updateSuccess,xshift=6em,yshift=5em] {  $k \leftarrow k+1 $};

    \draw [arrow] (simplePoll.north) -- node[pos=0.33, right] {YES} (updateSuccess.south);

    \draw [arrow] (simplePoll.east) -| node[pos=0.775, right] {NO} (updateFailure.south);

    \draw [arrow] (exclusionSearch) --node[pos=0.5, above] {YES}  node[midway, below,yshift = -1em] {Successful} (updateSuccess);

    \draw [arrow] (labelSuccess.north) to[out=90, in=270]  (updateK.south);
    \draw [arrow] (labelFailure.north) to[out=90, in=270]  (updateK.south);

    \draw[arrow] (updateK.north) --  ++(0, 4em) - |
     (simpleSearch.north) ;
     \node (xkdeltak) at ([yshift=2em,xshift=3em]simpleSearch.north) {$x^k, \Delta^k, \delta^k, \V^k$};
     \draw[arrow] ([yshift=10em]simpleSearch.north) -- (simpleSearch.north);
     \node (kzero) at ([yshift=9em,xshift=-2.2em]simpleSearch.north) {$k=0$};
    \begin{pgfonlayer}{background}
    \node (update_group) [group, fit=(updateSuccess) (updateFailure) (updateK) ] {};
    \end{pgfonlayer}
    \node at ([yshift=1em]update_group.north west) [grouplabel] {UPDATE};
\end{tikzpicture}
}
\end{center}
	\caption{An overview of the \ads{} framework.}
	\label{fig:overview:ADS} 
\end{figure}

With \mads{}, the dotted line in the figure is never followed because the search points always lie on the mesh, and therefore belong to the punctured space $\esh$.
Another difference between the \mads{} and \ads{} algorithms lies in the separation of the roles of the incumbent solution and of the poll center.
With \mads{} the poll center always coincides with the incumbent solution.  
With \ads{} the mechanism is more elaborate as there are three possible outcomes of the search step.

\begin{enumerate}
    \item  The search step successfully produces a trial point $\ys \in \esh$ with $\f(\ys) < f(x^k)$,
    where $x^k$ denotes the incumbent solution with respect to $\V^k$.  
    As soon as this happens, the search is interrupted and declared successful, the poll step is skipped, and the next incumbent is determined: $x^{k+1} \leftarrow \ys$.
    \item After evaluating all points of $\mathbb{S}^k$, the search step fails to produce a trial point $t$ with $\f(t) < f(x^k)$.  
    In this situation, the search is declared unsuccessful and polling at iteration $k$ is performed around the incumbent solution $p^k \leftarrow x^k$.
    The next incumbent will be $x^{k+1} \leftarrow x^k$ in the case that the poll step is also unsuccessful.
    \item The search step produces a trial point $\ys$ outside of the punctured space $\esh$ satisfying $\f(\ys) < f(x^k)$.
     As soon as this happens,the search is interrupted and declared improving, and polling at iteration $k$ is performed around $p^k \leftarrow \ys$.
     The next incumbent will be $x^{k+1} \leftarrow \ys$ if the poll step is also unsuccessful.
\end{enumerate}
The two first items are similar to \mads, however the third situation cannot occur with \mads.
The search step of \ads{} can produce a search point $\ys$ near a previously generated one (by optimizing a surrogate model for example).
The poll step will then be conducted around that promising point, and the next incumbent solution will be equal to $\ys$ if the poll step of the iteration fails to improve it. 

\Cref{algo-ads} can be modified to be executed in a non-opportunistic setting.
A non-opportuinistic search will evaluate all points of $\mathbb{S}^k$
    and will consider the one with the least value of $\f$ in order to to declare the step as successful, unsuccesful or improving.
It follows that the status of an iteration is no longer dependent on the order in which the trial points are evaluated, as it is the case for the opportunistic setting.
A non-opportunistic poll will behave in a similar way with the poll set $\mathbb{P}^k$.

\subsection{Polling in a subset of the poll set}
\label{poll}

The poll step is invoked following an unsuccessful or an improving search step.
The poll center $p^k$ is the incumbent solution $x^k$ in the former case,
 and the improved point $\ys$ in the latter, i.e., $p^k$ is the best solution found so far.
The poll set $\mathbb{P}^k$ is generated using a set $\mathbb{D}^k$ of normalized \textit{poll directions}, under a given norm.
The poll directions are typically chosen to form a positive spanning set~\cite{Davi54b} to ensure adequate coverage of the local region around $p^k$. The poll points are then generated along these directions using the \textit{frame size parameter} $\Delta^k \in \mathbb{R}_+^*$ -- this name comes from~\cite{AuDe2006} as this parameter plays the same role as in \mads{}. 
The poll set is  
\begin{equation*}
    \P^k \ := \ \{p^k + \Delta^k v :v \in \mathbb{D}^k\}.
\end{equation*} 
The frame size parameter $\Delta^k$ controls how close the generated trial points in $\P^k$ are from the poll center $p^k$.
The distance from any poll point to the poll center is exactly $\Delta^k$ since the poll directions are normalized.
As $\Delta^k$ gets smaller, the poll points are getting closer to $p^k$ which allows for finer local improvement. 
All poll points of $\P^k$ that do not belong to $\esh$ are rejected without being evaluated.
Such a rejected point is within $\delta^k$ of a previously visited points.
\Cref{examplepoll} illustrates a situation with four poll points in $\mathbb{R}^2$  ($\otimes$) located on the sphere of radius $\Delta^k$. 
The five evaluated points of $\mathbb{V}^k$ ($\cdot$) are represented by black dots at the center of the white balls of radius $\delta^k$.
The complement of these balls, the space is in gray, represents the punctured space $\esh$.
The poll point in red will not be evaluated as it does not belong to $\esh$.

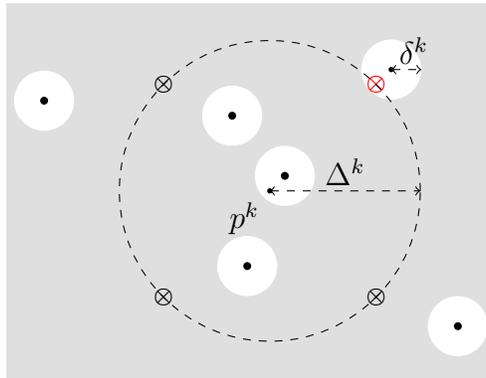
\begin{figure}[!ht]
\centering
\begin{tikzpicture}[scale=1]

\def\varDelta{2}
\def\vardelta{0.4}

\foreach \angle in {45, 135, 225, 315} {
  \coordinate (a\angle) at ({\varDelta*cos(\angle)}, {\varDelta*sin(\angle)});
  \node[scale=0.7] at (a\angle) {$\boldsymbol{\otimes}$};
}

\coordinate (outside) at ({\varDelta*cos(45)+0.2},{\varDelta*sin(45)+0.2});
\coordinate (badpoll) at ({\varDelta*cos(45)},{\varDelta*sin(45)});
\coordinate (p1) at (-3,1.2);
\coordinate (p2) at (2.5,-1.8);
\coordinate (p3) at (-0.5,1);
\coordinate (p4) at (-0.3,-1);
\coordinate (newpoint) at (0.2,0.2);

\foreach \pt in {outside, p1, p2, p3, p4, newpoint} {
    \begin{scope}
        \fill[white] (\pt) circle (\vardelta);  
    \end{scope}
}

\fill[black] (outside) circle (1pt);
\fill[black] (p1) circle (1.5pt);
\fill[black] (p2) circle (1.5pt);
\fill[black] (p3) circle (1.5pt);
\fill[black] (p4) circle (1.5pt);
\fill[black] (newpoint) circle (1.5pt);

\draw[<->, dashed] (outside) -- ++(\vardelta,0);
\node at ($(outside) + (\vardelta/2+0.1, +0.25)$) {$\delta^k$};

\node[scale=0.7] at (badpoll) {$\textcolor{red}{\boldsymbol{\otimes}}$};
\begin{scope}[on background layer]
\fill[gray!25] (-3.5,-2.5) rectangle (3,2.5); 
\end{scope}
\draw[<->,dashed] (0,0) -- (\varDelta,0);
\node at (\varDelta/2, +0.25) {$\Delta^k$};
\coordinate (pk) at (0,0);
\draw[dashed] (pk) circle(\varDelta);

\fill (pk) circle (1pt) node[below left] {$p^k$};
\end{tikzpicture}
\caption{Illustration of a poll set around a poll center $p^k$:
only three poll points belong to $\esh$.}
\label{examplepoll}
\end{figure}

When a poll point $\yp$ belongs to the punctured space and improves on the incumbent solution, i.e., $\f(\yp) < \f(x^k)$, then $\yp$ is set to be the next incumbent solution, and the poll step ends successfully. 
If $\yp$ does not improve the incumbent solution, i.e., $\f(\yp) \geq \f(x^k)$, it is rejected. Evaluation of poll points continues until all points in $\P^k \cap \esh$ are processed (or when a successful poll point is found when the poll is opportunistic). 
If no success occurs, the next incumbent solution is set as the poll center, i.e., $x^{k+1}=p^k$. 

Unlike the search step, the poll step cannot be simply improving, as no poll points outside of the punctured space are considered.

\subsection{The update step of \ads{}}
\label{sec:update}
Once that the next incumbent solution $x^{k+1}$ is defined, the parameters $\Delta^k$ and $\delta^k$ are also updated based on the success or failure of the search and poll steps.  
In \Cref{algo-ads}, this update mechanism is carried out using the two procedures: \increase and \decrease.  
When either the search or the poll step is declared successful, the \increase procedure is applied to enlarge the values of the parameters $\Delta^{k+1}$ and $\delta^{k+1}$, thereby enabling a broader exploration of the search space at the next iteration. 
Otherwise, the \decrease procedure is used to enforce smaller values for $\Delta^{k+1}$ and $\delta^{k+1}$.

Different possibilities can be considered to define such update procedures.  
For instance, for some fixed parameter $\tau \in\, ]0,1[$, one can set $\Delta^{k+1} = \tau^{-1} \Delta^k$ if a successful search or poll step occurs; otherwise, $\Delta^{k+1} = \tau \Delta^k$~\cite{AuDe2006}.  
Note that in the \ads{} setting, the update rule is less restrictive {than in \mads} since the parameter $\tau$ is not required to be rational (the rationality requirement is shown to be necessary  in~\cite{Audet04a}).  
The update rule for the exclusion radius is $\delta^k = \min \{ \Delta^k, (\Delta^k)^2/\Delta^0 \}$. Such a choice implies, in particular, that $\liminf_{k \to \infty} \delta^k/\Delta^k = 0$ as long as $\liminf_{k \to \infty} \delta^k = 0$. 
In this case, the procedures \increase and \decrease will be, respectively, set in \Cref{algo-ads}, as follows:
\begin{eqnarray}
\label{decrease:rule:implementation}
\decreaset(\DeltA^k,\ \delta^k)& =& \left(\tau \DeltA^k,\quad\, \min \left\{\tau \DeltA^k,\dfrac{(\tau \DeltA^k)^2}{\DeltA^0}\right\}\right)     \\
\label{increase:rule:implementation}
\mbox{and} \hspace*{2cm}
\increaset(\DeltA^k, \ \delta^k)& = & \left(\tau^{-1} \DeltA^k,\ \min \left\{\tau^{-1} \DeltA^k,\dfrac{(\tau^{-1} \DeltA^k)^2}{\DeltA^0}\right\}\right).
\hspace*{3cm}~
\end{eqnarray}

A second set of update rules is presented in~\cite{AuLeDTr2018}
    to handle granular and integer variables.

\section{Theoretical analysis of \ads{}}
\label{sec-analysis}

This section presents the theoretical foundations of the \ads{} framework. 
Although \ads{} operates over a punctured space $\esh$ rather than using a mesh or a sufficient decrease criterion, its structure permits a convergence analysis analogous to that of mesh-based and sufficient decrease based methods. It is shown that, under standard assumptions, the algorithm generates a refining subsequence converging to a Clarke-stationary point. The analysis relies on the asymptotic behavior of the exclusion radius, the definition of refining directions, and the properties of the Clarke generalized derivative adapted to constrained settings. Moreover, it is shown that, with appropriate parameter choices, \ads{} generalizes certain instances of \mads{}.

\subsection{Convergence analysis of \ads{}}
The convergence analysis of \Cref{algo-ads} follows the same 
    structure as the one for \mads{}~\cite{AuDe2006,AuHa2017}.
One first shows that the algorithm produces a convergent subsequence to a point $\hat x \in \Omega$. 
Then one proves that $\hat x$ is Clarke-stationary.  
For the purpose of our convergence analysis, the following assumption needs to be verified.

\begin{assumption} \label{asm:1}
The set $\compact$,
    defined as the closure of $\left\{ x \in \Omega : f(x) \leq f(x^0) \right\}$ where $x^0 \in \Omega$ is the initial point, is compact. 
\end{assumption}

\begin{theorem}
    \label{Th-limdelta}
Let \Cref{asm:1} hold. The exclusion radius parameter sequence $\{\deltA^k\}_{k \in \N}$ produced by an instance of \ads{} satisfies 
$$\displaystyle\lim_{k \rightarrow \infty} \deltA^k = 0.$$
\end{theorem}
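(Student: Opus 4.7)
The plan is to reduce the claim to $\lim_{k\to\infty}\Delta^k=0$ and then prove this limit by a case analysis on the number of successful iterations. The update rule of \Cref{sec:update} writes $\delta^k=\min\{\Delta^k,(\Delta^k)^2/\Delta^0\}$ as a continuous nondecreasing function of $\Delta^k$ that vanishes at $0$, so $\delta^k\to 0$ is equivalent to $\Delta^k\to 0$. I partition the iterations into the \emph{genuine successes}, i.e.\ those in which \Cref{algo-ads} invokes \increaset (search successful at some $\ys\in\esh$, or poll successful at some $\yp$), and all other iterations, in which \decreaset is invoked and $\Delta^k$ is multiplied by $\tau\in(0,1)$. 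In both subcases of a genuine success, the new incumbent $x^{k+1}\in\{\ys,\yp\}$ lies in $\esh$ at iteration $k$, whereas in the ``improving'' search case followed by an unsuccessful poll the incumbent $\ys$ need not lie in $\esh$ and \decreaset is still applied.

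If only finitely many genuine successes occur, then beyond some iteration every update multiplies $\Delta^k$ by $\tau$, yielding geometric decay to zero. Otherwise, let $k_1<k_2<\cdots$ enumerate the genuine successes. Because $x^{k_j+1}\in\esh$ at iteration $k_j$ and the visited set is cumulative so that $x^{k_{j'}+1}\in\V^{k_j}$ whenever $j'<j$, we obtain the pairwise separation
$$\|x^{k_j+1}-x^{k_{j'}+1}\| \ \geq \ \delta^{k_j}\qquad\text{for every }j'<j.$$
Inspecting the update cases shows that $f(x^{k+1})\leq f(x^k)$ always holds and that each incumbent belongs to $\Omega$, so the incumbent sequence stays inside $\compact$, which is compact by \Cref{asm:1}.

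Extracting a convergent, hence Cauchy, subsequence $x^{k_{j_i}+1}\to\hat x$ from the incumbent sequence and combining with the separation inequality forces $\delta^{k_{j_i}}\to 0$. The standard fact that a bounded sequence every subsequence of which has a further subsequence tending to $0$ must itself tend to $0$ then yields $\delta^{k_j}\to 0$ along the full enumeration of genuine successes, hence $\Delta^{k_j}\to 0$ as well.

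It remains to propagate from $\{k_j\}$ to the full index set. For each $k$, let $m(k)$ be the largest element of $\{k_j\}\cup\{0\}$ not exceeding $k$. Every iteration in $(m(k),k]$ invokes \decreaset, and iteration $m(k)$ itself at most multiplies $\Delta$ by $\tau^{-1}$, so $\Delta^k\leq\tau^{-1}\Delta^{m(k)}$. As $k\to\infty$, $m(k)\to\infty$ with $\Delta^{m(k)}\to 0$, so $\Delta^k\to 0$ and therefore $\delta^k\to 0$. The main obstacle is the separation step: one must carefully verify that the new incumbent at a genuine success really lies in $\esh$, which is exactly why the improving search case has to be kept out of the successful class; once this bookkeeping is right, compactness and the Cauchy argument take care of the rest.
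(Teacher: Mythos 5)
Your proof is correct and rests on the same two pillars as the paper's: a dichotomy on whether finitely or infinitely many successful iterations occur, geometric decay by repeated \decrease{} in the first case, and in the second case the observation that the incumbent accepted at a successful iteration lies in the punctured space, hence successful incumbents are pairwise separated, which contradicts compactness of $\compact$ via Bolzano--Weierstrass. You are right, and more careful than the paper, that the ``improving'' search followed by an unsuccessful poll must be excluded from the successful class, since that incumbent need not lie in $\esh$. The packaging differs in two ways. First, the paper fixes $\varepsilon>0$ and shows $\{k:\delta^k\ge\varepsilon\}$ is finite, so the separation constant is the fixed $\varepsilon$ and the contradiction with Bolzano--Weierstrass is immediate; you instead prove the limit directly along the successes with a varying separation $\delta^{k_j}$, which forces the extra Cauchy-subsequence and subsequence-of-subsequences steps and the final propagation bound $\Delta^k\le\tau^{-1}\Delta^{m(k)}$ --- all fine, just more bookkeeping. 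Second, and this is the one substantive caveat: your opening reduction from $\delta^k$ to $\Delta^k$ uses the specific coupling $\delta^k=\min\{\Delta^k,(\Delta^k)^2/\Delta^0\}$ of \Cref{sec:update}, whereas the theorem is stated for a generic instance of \ads{} and the paper's argument runs directly on $\delta^k$, needing only that \decrease{} drives $\delta^k$ to zero under repeated failures and that $\delta^k$ grows under \increase{}. Your argument works verbatim on $\delta^k$ itself without the detour through $\Delta^k$, and restating it that way would remove the dependence on that particular update rule.
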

\begin{proof} Let $\varepsilon \in\R_+^*$ and $\mathcal{A}_\varepsilon:=\left\{k\in\N:\deltA^k\geq \varepsilon\right\}$. To prove that $\displaystyle\lim_{k \rightarrow \infty} \deltA^k = 0$, it suffices to show that the set  $\mathcal{A}_\varepsilon$ is finite. 

By contradiction, assume that $|\mathcal{A}_\varepsilon|$ is infinite. 
Consider $\mathcal{S}_{\varepsilon}\subset \mathcal{A}_\varepsilon$ the set of successful iterations $k$ such that  $\deltA^k\geq \varepsilon$.
Two scenarios can occur: $|\mathcal{S}_{\varepsilon}|$ is finite, in which case there exists $k_0 \in \N$ such that for all $k\geq k_0$, the iteration $k$ is unsuccessful, thus $\displaystyle \lim_{k\in\mathcal{A}_\varepsilon }\deltA^k=0$ which contradicts the fact that $\deltA^k\geq \varepsilon$, for all $k\in\mathcal{A}_\varepsilon$. 
If $|\mathcal{S}_\varepsilon|$ is infinite, then $x^k \in \esh$  for all $k\in \mathcal{S}_\varepsilon$. 
But since $\{x^j \in \R^n: j\in \mathcal{S}_\varepsilon~\text{and}~j< k \}\subseteq \V^k$, it follows that
$$ x^k \in \left\{ x \in \R^n : \Vert x-x^j\Vert \geq \delta^k,~j \in \mathcal{S}_\varepsilon \text{ and } j< k\right\}~~\text{and}~~\delta^k\ge \varepsilon.$$
Hence, for any $(k_1,k_2)\in \mathcal{S}_\varepsilon \times \mathcal{S}_\varepsilon $ such that   $ k_1 \neq k_2$, \begin{equation}
    \label{thlimeq}
    \|x^{k_1} -x^{k_2} \| \ge  \varepsilon.
\end{equation} 
However, since $\{x^k\}_{k \in \mathcal{S}_\varepsilon}$ belongs to the compact set $\compact$ (from~\Cref{asm:1}), the Bolzano-Weierstrass theorem ensures the existence of a convergent subsequence, contradicting~\eqref{thlimeq}. Hence, the set $\mathcal{A}_\varepsilon$ is finite. 
\end{proof}

As the exclusion size parameter $\deltA^k$ goes to $0$,  \Cref{Th-limdelta}  guarantees that an infinite number of unsuccessful iterations will occur. 
It also ensures that \ads{} will terminate whenever a minimal precision on $\deltA^k$ is imposed. Note that,  \Cref{Th-limdelta} presents a stronger result compared to mesh-based approaches where, under the same assumption, one can only guarantee that $\lim\inf_{k\to \infty}\deltA^k=0$~\cite[Theorem~3.3]{Torc97a}. In this perspective, the asymptotic behavior of \ads{} is closer to that of sufficient decrease based methods, e.g.,~\cite[Theorem~4.1]{diouane2022trego} and~\cite[Lemma~3.2 ]{BeSoVi2023}. 

The following definition recalls the notions of refining directions and refined points.

\begin{Def}
\cite[Definition~3.5]{AuDe03a} 
 Let $\K$ be an infinite subset of indices of unsuccessful iterations.
 If the subsequence of incumbent solutions $\{x^k\}_{k \in \K}$ is convergent, 
 then it is said to be a {\em refining subsequence} and its limit $\hat{x}$ is called a {\em refined point}. 
\end{Def}

\begin{corollary}
\label[corollary]{lem:refined:point}
Let~\Cref{asm:1} hold. Then, there exists a  refining subsequence $\{x^k\}_{k \in \K}$ produced by an instance of \ads{} with refined point $\hat{x}\in \compact$.
\end{corollary}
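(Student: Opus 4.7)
The plan is to combine \Cref{Th-limdelta} with a compactness argument in the spirit of the classical \mads{} convergence proof. The corollary asserts two things: (i) that the set of unsuccessful iterations is infinite, so that a refining subsequence can in principle be formed, and (ii) that the corresponding sequence of incumbents admits a convergent subsequence inside $\compact$. Both parts will be established separately.

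First I would show that the set $\mathcal{U} := \{k \in \N : \text{iteration } k \text{ is unsuccessful}\}$ is infinite. Suppose for contradiction that $\mathcal{U}$ is finite, so there exists $k_0$ such that every iteration $k \geq k_0$ is successful (either in the search or in the poll step). By the update rules described in \Cref{sec:update}, specifically the \increaset\ rule in~\eqref{increase:rule:implementation}, every successful iteration strictly increases $\DeltA^k$ by the factor $\tau^{-1} > 1$, and $\delta^k$ is obtained as $\min\{\DeltA^k, (\DeltA^k)^2/\DeltA^0\}$, which is non-decreasing in $\DeltA^k$. Therefore $\{\delta^k\}_{k \geq k_0}$ is bounded below by a strictly positive constant, contradicting the conclusion $\lim_{k\to\infty} \delta^k = 0$ of \Cref{Th-limdelta}. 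Hence $\mathcal{U}$ is infinite.

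Next I would argue that the entire incumbent sequence $\{x^k\}_{k \in \N}$ lies in $\compact$. The algorithm is initialized with $x^0 \in \Omega$, and by the opportunistic acceptance rule of \Cref{algo-ads}, the incumbent is updated only to points $y$ with $\f(y) < f(x^k) \leq f(x^0)$, which forces $y \in \Omega$ and $f(y) \leq f(x^0)$; otherwise $x^{k+1} = x^k$ or $x^{k+1} = p^k$ which by induction also satisfies the same bound. Thus $\{x^k\}_{k \in \mathcal{U}} \subseteq \{x \in \Omega : f(x) \leq f(x^0)\} \subseteq \compact$. Since $\compact$ is compact by \Cref{asm:1}, the Bolzano--Weierstrass theorem yields an infinite subset $\K \subseteq \mathcal{U}$ such that $\{x^k\}_{k \in \K}$ converges to some limit $\hat{x} \in \compact$. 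By the definition of refining subsequence and refined point, this is the desired subsequence.

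I do not anticipate a major obstacle here: the corollary is essentially a direct consequence of \Cref{Th-limdelta} together with standard compactness. The only minor subtlety is to carefully justify that infinitely many unsuccessful iterations must occur, which requires examining the \increase/\decrease\ update rules and noting that successive successful iterations can only grow $\delta^k$, incompatible with $\delta^k \to 0$.
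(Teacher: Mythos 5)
Your proof is correct and follows essentially the same route as the paper's: invoke \Cref{Th-limdelta} to conclude that $\delta^k \to 0$ forces infinitely many unsuccessful iterations (since $\delta^k$ is only reduced at unsuccessful ones), note that all incumbents stay in the compact set $\compact$, and apply Bolzano--Weierstrass. You simply spell out in more detail the two steps that the paper states in one line each.
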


\begin{proof}
\Cref{Th-limdelta} ensures that $\lim_{k \to  \infty} \delta^k = 0$ and therefore, there are infinitely many unsuccessful iterations since $\delta^k$ is reduced only at unsuccessful iterations.
Furthermore, all incumbent solutions belong to the compact set $L$.
It follows that an accumulation point exists.
\end{proof}

The hypertangent cone~\cite{Rock80a} provides a local approximation of a set $\Omega \subseteq \mathbb{R}^n$ at a point $x \in \Omega$ by identifying directions that remain within $\Omega$ under small perturbations.

\begin{Def}
 ~\cite[Definition~3.3]{AuDe2006}
The {\em hypertangent set to $\Omega$ at $x$} consists of all vectors $v \in \mathbb{R}^n$ for which there exists a scalar $\varepsilon > 0$ such that 
\[
y + t w \in \Omega \quad \text{ for all }\  w \in B_{\varepsilon}(v),\ y \in \Omega \cap B_{\varepsilon}(x)\  \text{ and }\  0<t<\varepsilon.
\]
The hypertangent cone to $\Omega$ at $x$, denoted by $\T(x)$, is the set of {\em hypertangent vectors} to $\Omega$ at $x$.

\end{Def}

The next definition introduces {\em refining directions}
as accumulation points of the polling directions, which are normalized by definition.

\begin{Def}
A normalized direction $\v \in \T(\hat x)$ is said to be a {\em refining direction} if and only if there exists an infinite subset $\L \subseteq \K$ 
such that for each $k \in \L$, there is a poll direction $v^k \in \mathbb{D}^{k}$ for which $p^k + \DeltA^kv^k\in \Omega$ is generated (but not necessarily evaluated), and $\displaystyle\lim_{k \in \L} v^k = \v$.
\end{Def}

Note that in this definition, $\L$ is necessarily a set of unsuccessful iterations, thus, even if the poll is opportunistic, the only reason why the tentative poll point $p^k + \DeltA^kv^k$ may not be evaluated is when it does not belong to $\esh$. 
Now that \ads{} produces a refining subsequence $\{x^k\}_{k\in\K}$ with a refined point $\hat x \in \compact$, the next steps prove that this refined point $\hat x$ is Clark-Jahn stationary.
Moreover, it allows to express the Clarke generalized derivative as defined in~\cite{Clar83a} and extended in~\cite{Jahn94a} for $\v \in \R^n$:
\begin{equation*}
  f^\circ( \hat x; \v) := \limsup_{\substack{x \to \hat x, ~ t \searrow 0, \\ x + t\v \in \Omega, ~x\in\Omega}} \frac{f(x + t\v) - f(x)}{t}.
\end{equation*}
This definition accounts for cases where the evaluations of $f$ are limited to the feasible domain $\Omega$. Instead of the classical Clarke derivative, Jahn's approach considers directional limits restricted to points within $\Omega$, ensuring a consistent extension to the constrained optimization settings. The Clarke generalized derivative from~\cite{Clar83a} and~\cite{Jahn94a} are identical for points in the interior of the constraints set $\Omega$.\\

\begin{Th}
    Let \Cref{asm:1} hold, $\hat x \in \Omega$ be a feasible refined point produced by an instance of \ads{}, and $\v \in \T(\hat x)$ be a refined direction for $\hat x$. 
    If the objective function $f$ is Lipschitz continuous near the refined point $\hat x$, then
    \begin{equation*}
        f^\circ(\hat x; \v) \ \geq \ 0.
    \end{equation*}
\end{Th}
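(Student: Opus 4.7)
The plan is to mirror the classical Clarke--Jahn stationarity argument used for \mads{}, adapted to the punctured-space setting. First unpack the refining-direction hypothesis to get an infinite $\L\subseteq\K$ and normalized poll directions $v^k\in\mathbb{D}^k$ with $v^k\to\v$ and $p^k+\DeltA^kv^k\in\Omega$ for every $k\in\L$. Each such $k$ is an unsuccessful iteration in the sense of \Cref{algo-ads}, so the algorithm takes the decrease branch: $x^{k+1}=p^k$, and every poll point $\yp\in\P^k\cap\esh$ that is actually evaluated satisfies $\f(\yp)\ge f(p^k)$. Moreover, $p^k$ equals either the incumbent $x^k$ (unsuccessful search) or an improving search point $\ys$; in both cases $f(p^k)\le f(x^k)=\min_{y\in\V^k}f(y)$.

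The key step would be to bound $[f(p^k+\DeltA^k\v)-f(p^k)]/\DeltA^k$ from below. Let $\kappa$ denote a Lipschitz constant of $f$ on a neighborhood of $\hat x$, and set $t^k:=p^k+\DeltA^kv^k\in\Omega$. When $t^k\in\esh$ the algorithm evaluates it and $f(t^k)\ge f(p^k)$. When $t^k\notin\esh$, \Cref{def:esh} produces $y^k\in\V^k$ with $\|y^k-t^k\|<\deltA^k$, so $f(y^k)\ge f(x^k)\ge f(p^k)$, and Lipschitz continuity gives $f(t^k)\ge f(y^k)-\kappa\deltA^k\ge f(p^k)-\kappa\deltA^k$. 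Combining either case with the Lipschitz bound $|f(p^k+\DeltA^k\v)-f(t^k)|\le\kappa\DeltA^k\|\v-v^k\|$ would yield
\[
\frac{f(p^k+\DeltA^k\v)-f(p^k)}{\DeltA^k}\ \ge\ -\kappa\,\|\v-v^k\|\ -\ \kappa\,\frac{\deltA^k}{\DeltA^k}.
\]

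Passing to the limit along $\L$, the first right-hand term vanishes by definition of refining direction. For the second, \Cref{Th-limdelta} together with the update rules~\eqref{decrease:rule:implementation}--\eqref{increase:rule:implementation} force $\DeltA^k\to 0$ (if $\DeltA^k$ stayed bounded below on a subsequence, so would $\deltA^k$, contradicting \Cref{Th-limdelta}), and hence $\deltA^k/\DeltA^k\le\DeltA^k/\DeltA^0\to 0$. Since $\v\in\T(\hat x)$, the point $p^k+\DeltA^k\v$ lies in $\Omega$ for all $k$ sufficiently large in $\L$, so the difference quotient is admissible in the Clarke--Jahn $\limsup$ defining $f^\circ(\hat x;\v)$, and the stated inequality follows.

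The main technical obstacle I anticipate is establishing $p^k\to\hat x$ along a suitable subsequence of $\L$: the refining subsequence is defined via $\{x^k\}$ rather than $\{p^k\}$, and the two can differ exactly when the search step is declared improving but not successful. The natural remedy is to split $\L$ according to whether $p^k=x^k$ holds throughout (in which case convergence is inherited from the refining subsequence) or $p^k=\ys$ holds throughout, and in the latter case to exploit $\ys\notin\esh$, $\deltA^k\to 0$, and the Lipschitz bound $0<f(x^k)-f(\ys)\le\kappa\deltA^k$ together with a further extraction. Once this is handled, the estimate and limit step close the argument in the same spirit as the \mads{} analysis in~\cite{AuDe2006}.
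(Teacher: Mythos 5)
Your overall strategy matches the paper's: at each unsuccessful $k\in\L$ you examine the tentative poll point $t^k=p^k+\DeltA^k v^k$, and when $t^k\notin\esh$ you replace it by a visited point $y^k\in\V^k$ with $\|y^k-t^k\|<\deltA^k$, then let $\deltA^k/\DeltA^k\to 0$ absorb the perturbation. The paper folds the perturbation into the direction (writing $y^k=p^k+\DeltA^k\bigl(v^k+\tfrac{\deltA^k}{\DeltA^k}w^k\bigr)$ and invoking the ``$v\to\v$'' form of the Clarke--Jahn derivative), whereas you carry an explicit Lipschitz constant $\kappa$; these are equivalent. There is, however, a genuine gap in your chain $f(y^k)\ge f(x^k)\ge f(p^k)$: what the algorithm guarantees for a visited point is $\f(y^k)\ge \f(x^k)$, and this transfers to the true objective $f$ only if $y^k\in\Omega$. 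A visited point recorded with $\f(y^k)=\infty$ is infeasible, its actual value $f(y^k)$ is not controlled by the algorithm, and your estimate $f(t^k)\ge f(y^k)-\kappa\deltA^k\ge f(p^k)-\kappa\deltA^k$ can then fail. The paper closes this by using the hypertangency of $\v$ a second time: since $y^k=p^k+\DeltA^k u^k$ with $u^k\to\v\in\T(\hat x)$, $p^k\in\Omega$, $p^k\to\hat x$ and $\DeltA^k\to 0$, one gets $y^k\in\Omega$ for all $k$ in a further subset $\W\subseteq\L$, and only then $f(y^k)=\f(y^k)\ge\f(p^k)=f(p^k)$. You invoke hypertangency only for $p^k+\DeltA^k\v$; you need it for $y^k$ as well.

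The concern you raise about $p^k\to\hat x$ is legitimate (the paper dispatches it in one line via $x^{k+1}=p^k$), but your proposed remedy does not close it: the bound $0<f(x^k)-f(\ys)\le\kappa\deltA^k$ yields convergence of objective values, not spatial convergence of $\ys$ to $\hat x$. An improving search point is within $\deltA^k$ of \emph{some} point of $\V^k$, not of $x^k$, so $\ys$ may be far from $x^k$, and closeness of $f$-values does not force closeness of the points. The standard repair is to build the refining subsequence from the poll centers directly: at unsuccessful iterations $p^k=x^{k+1}$ lies in the compact set $\compact$, so one extracts the index set so that $\{p^k\}$ itself converges and takes $\hat x$ to be that limit; with that convention your estimate and limit passage go through.
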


\begin{proof}
\label{demo:3}
Let $\L$ be such that $\{x^k\}_{k \in \L}$ is a refining subsequence with refined point $\hat x$ and refining direction $\hat v$.
Consider the subsequence of poll directions 
    $\{v^k\}_{k \in \L}$ that satisfy $\lim_{k \in \L} v^k= \v$ and $p^k+\DeltA^k v^k\in \Omega$ for all $k\in\L$.
Notice that $ \lim_{k \in \L}\Delta^k = 0$, and since $\L$ is a subset of unsuccessful iteration indices, then $x^{k+1} = p^k$ for all $k \in \L$, thus, $ \lim_{k \in \L}p^k = \hat x$.

Let $t^k = p^k+\DeltA^k v^k$ be the tentative poll point.
If $t^k$ does not belongs to the punctured space $\esh$, 
 then there exists a previously visited point $y^k \in \V^k \subseteq \V^{k+1}$,
    called the \textit{corrected poll point},
    such that $ \|p^k+ \DeltA^kv^k-y^k\|<\deltA^k.$
Introducing $w_k = \frac1{\delta^k}(y^k-t^k)$ allows writing 
\begin{equation}
    y^k \ = \ t^k + \delta^k w^k 
        \ = \ p^k + \Delta^k v^k + \delta^k w^k 
        \ = \ p^k + \Delta^k\left( v^k + \tfrac{\delta^k}{\Delta^k} w^k\right) 
        \ \in \ \V^{k+1}
        \quad \mbox{ and } \quad
        \|w^k\| \ \leq \ 1.
    \label{eq-y-esh}
\end{equation}
If $t^k$ belongs to the punctured space $\esh$, then define $y^k = t^k$ and $w^k=0$ so that~\eqref{eq-y-esh} remains valid.

\begin{figure}[!h]
    \centering
    \begin{tikzpicture}[scale=2]
        \coordinate (p) at (0,1);
        \coordinate (pv) at (3,0.5);
        \coordinate (pvw) at (2.8,1);

        \fill[white] (pvw) circle (0.8);

        \draw[<->,dashed, shorten <=2pt] (pvw) -- ++(-0.8,0) node[midway, above] {$\deltA^k$};

        \fill[white] (p) circle (0.8);
        \filldraw (p) circle (0.04) node[below left] {$p^k$};
        \filldraw (pv) circle (0.04) node[below] {$t^k\notin \esh$};
        \filldraw (pvw) circle (0.04) node[above right] { $t^k+ \deltA^k w^k \in \V^k \subseteq \V^{k+1}$}; 

        \draw[->, thick, shorten >=2pt, shorten <=2pt] (p) -- (pv) ;
        \draw[->, thick, shorten >=2pt, shorten <=2pt] (pv) -- (pvw) ;

        \draw[dashed] (pvw) -- (pv);
    \begin{scope}[on background layer]
        \fill[gray!25] (-0.9,-0) rectangle (5,2); 
    \end{scope}
    \end{tikzpicture}
    \caption{Illustration of the corrected poll point $y^k=t^k+\deltA^kw^k$ when the tentative poll point $t^k=p^k+\DeltA^kv^k$ does not belong to the punctured space. The $\ell_2$-norm is used for the punctured space.}
    \label{wk:drawing}
\end{figure}
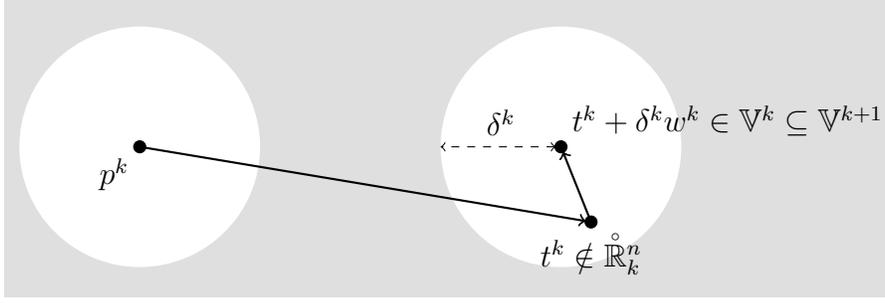

\Cref{wk:drawing} shows the corrected poll point $y^k$ in the case where the tentative poll point $t^k = p^k+\DeltA^kv^k$ does not belong to the punctured space.
Using the requirement that 
    $ \lim_{k \in \L}\tfrac{\deltA^k}{\DeltA^k} = 0$ and the fact that $ \lim_{k\in \L} v^k=\v$, it follows that
$$\displaystyle \lim_{k \in \L} \, v^k+\dfrac{\deltA^k}{\DeltA^k}w^k  \ = \ \v.$$
Furthermore, since $\hat v$ is an hypertangent direction, there exists a subset $\W\subseteq \L$, such that for all $k\in \W$, 
$$y^k,p^k \in \Omega \quad \mbox{ and } \quad \f(y^k) = f(y^k)\geq f(p^k).$$
Since $f$ is Lipschitz continuous near $\hat x$,
 and by applying~\cite[Proposition~3.9]{AuDe2006}, it follows that

\begin{align*}
  f^\circ( \hat x; \v) 
        \ = \limsup_{\substack{x \to \hat x, ~ t \searrow 0, \\ x + t\v \in \Omega, ~x\in\Omega}} \frac{f(x + t\v) - f(x)}{t} 
        &= \limsup_{\substack{x \to \hat x, ~ t \searrow 0, \\ x + tv \in \Omega, ~x\in\Omega \\  v \to \v}} \dfrac{f(x+tv)-f(x)}{t}\\
        &\geq \ \limsup_{k \in \W} \frac{f\left(p^k + \DeltA^k  \left(v^k +\dfrac{\deltA^k}{\DeltA^k}w^k\right) \right) - f(p^k)}{\DeltA^k}\\
        & =  \  \limsup_{k \in \W} \frac{f(y^k ) - f(p^k)}{\DeltA^k} \ \geq \ 0.
\end{align*}
\end{proof}

\subsection{\orthomads{} and \qrmads{} are instances of \ads{}}
\label[subsection]{subsection:mads:description}
This section first describes \orthomads{}~\cite{AbAuDeLe09,AuHa2017} and then constructs a specific instance of \ads{} that is shown to generate the same sequence of trial points, 
when applied to the same optimization problem.
Variables and parameters associated with \orthomads{} are denoted with a bar (e.g., $\bar{x}^0$), while those of \ads{} are written without (e.g., $x^0$).

With \orthomads{}, the poll points belong to the frame set:
\[
\bar{\F}^k := \left\{ x \in \mesh : \| x - \bar x^k \|_\infty = \bar\DeltA^k \right\},
\]
where $\mesh$ is the mesh (see Definition~\ref{def:mesh}).
The poll step involves a positive spanning set $ \mathbb{\bar D}^k$ such that the poll set 
$$\bar \P^k := \left \{ \bar x^k+\bar \deltA^k  \bar d^k \text{ : } \bar d^k \in  \mathbb{\bar D}^k\right \}$$
is a subset of $\bar \F^k$  of extent $\mathbb{\bar D}^k$. The poll step uses the matrix $D = GZ = [I_n\ -I_n]$, and the set $\bar{\mathbb{D}}^k$ is generated using a Householder transformation, as detailed in~\cite{AuHa2017}. 
The update rule for the parameters $\bar{\delta}^k$ and $\bar{\Delta}^k$ 
is 
\begin{eqnarray}
\label{decrease:MADS}
    \overline{\decreaset}(\bar\DeltA^k, \, \bar\deltA^k) &:=& \left(\max\left\{\sqrt{\tau \bar\deltA^k}, \ \tau \bar\deltA^k\right\}, \qquad \tau \bar\deltA^k\right) \\
\label{increase:MADS}
\mbox{and} \hspace*{2cm}
     \overline{\increaset}(\bar\DeltA^k,\, \bar\deltA^k) &:=& \left(\max\left\{\sqrt{\tau^{-1} \bar\deltA^k},\ \tau^{-1} \bar\deltA^k\right\},\ \tau^{-1} \bar\deltA^k\right),
\hspace*{3cm}~
\end{eqnarray} 
with $\tau = \frac{p}q \in (0,1)$ and $p, q \in \N$. 
This rule satisfies  
$\liminf_{k \to \infty} \bar{\delta}^k / \bar{\Delta}^k = 0$ whenever $\liminf_{k \to \infty} \bar{\delta}^k = 0$.

An important property of \orthomads{} is that the union of meshes with different sizes can be embedded into a finer mesh. This result is established in the following lemma.

\begin{lemma}
\label{lem:finner:mesh} 
Let \Cref{asm:1} hold and consider \orthomads{} with $\tau=\tfrac{p}{q} \in \mathbb{Q}$. 
Then, at each iteration $k$, there exists a pair of integers $\check{z}$, $\hat z^k\in \Z$ such that
\begin{equation*}
    \bigcup_{j\leq k}\mathbb{M}^j \ \subseteq \ \left\{\bar x^0+ \tfrac{p^{\check{z}}}{q^{\hat z^k}}\bar\deltA^0GZy:y\in \N^p \right\} \mbox{ .}
\end{equation*}
\end{lemma}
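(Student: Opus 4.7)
The plan is to prove this by induction on $k$, exploiting the specific structure of \orthomads{} where $D = GZ = [I_n\ -I_n]$, so that $GZ\mathbb{N}^p = \mathbb{Z}^n$ (every integer vector is a difference of two nonnegative ones). Under this reduction, each mesh $\mathbb{M}^j$ becomes the affine lattice $\bar{x}^j + \bar{\delta}^j \mathbb{Z}^n$, and the right-hand side of the claimed inclusion is $\bar{x}^0 + \tfrac{p^{\check{z}}}{q^{\hat{z}^k}}\bar{\delta}^0\mathbb{Z}^n$. So the lemma reduces to showing containment of affine integer lattices.

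The first step is to characterize the mesh size. From \eqref{decrease:MADS}--\eqref{increase:MADS}, each successful iteration applies $\bar{\delta}^{j+1} = \tau^{-1}\bar{\delta}^j$ and each unsuccessful one applies $\bar{\delta}^{j+1} = \tau\bar{\delta}^j$, hence $\bar{\delta}^j = (p/q)^{a_j}\bar{\delta}^0$ where $a_j \in \mathbb{Z}$ is the net count of decrease-minus-increase steps through iteration $j$. With this in hand, I would set $\check{z} := \min\{a_0,\ldots,a_k\}$ and $\hat{z}^k := \max\{a_0,\ldots,a_k\}$ (both depending on $k$, as the paper's notation implicitly permits for $\check{z}$). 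Then for every $j \leq k$, the ratio $\bar{\delta}^j/\bigl(\tfrac{p^{\check{z}}}{q^{\hat{z}^k}}\bar{\delta}^0\bigr) = p^{a_j-\check{z}}\,q^{\hat{z}^k-a_j}$ is a positive integer, which gives the scale inclusion $\bar{\delta}^j\mathbb{Z}^n \subseteq \tfrac{p^{\check{z}}}{q^{\hat{z}^k}}\bar{\delta}^0\mathbb{Z}^n$.

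It then remains to prove the centering condition $\bar{x}^j - \bar{x}^0 \in \tfrac{p^{\check{z}}}{q^{\hat{z}^k}}\bar{\delta}^0\mathbb{Z}^n$ for all $j \leq k$. I would argue this by a secondary induction on $j$: the base case $j=0$ is immediate, and at each step either $\bar{x}^{j+1} = \bar{x}^j$ (unsuccessful iteration) or $\bar{x}^{j+1} \in \mathbb{M}^j$, i.e., $\bar{x}^{j+1} = \bar{x}^j + \bar{\delta}^j z$ with $z \in \mathbb{Z}^n$ (all trial points of \orthomads{} lie on the current mesh). In both cases, the inductive hypothesis on $\bar{x}^j - \bar{x}^0$ combined with the scale inclusion from the previous step places $\bar{x}^{j+1} - \bar{x}^0$ in $\tfrac{p^{\check{z}}}{q^{\hat{z}^k}}\bar{\delta}^0\mathbb{Z}^n$. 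Together, the scale inclusion and the centering condition yield $\mathbb{M}^j \subseteq \bar{x}^0 + \tfrac{p^{\check{z}}}{q^{\hat{z}^k}}\bar{\delta}^0\mathbb{Z}^n$ for every $j \leq k$, which is the desired conclusion.

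The main obstacle is essentially bookkeeping: since increases can inflate $\bar{\delta}^j$ above $\bar{\delta}^0$ and decreases can shrink it below, the target fine lattice must simultaneously refine every scale seen up to iteration $k$. The simultaneous min/max choice of $\check{z}$ and $\hat{z}^k$ handles this cleanly, and the whole argument is in the same spirit as the classical mesh-refinement arguments for GPS~\cite{Audet04a} and \mads{}~\cite{AuDe2006}, specialized here to the multiplicative rational structure imposed by $\tau = p/q$.
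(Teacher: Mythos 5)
Your proof is correct and follows essentially the same route as the paper's: both write $\bar\delta^{\,j}=\tau^{a_j}\bar\delta^{\,0}$, observe that $p^{\,a_j-\check z}q^{\,\hat z^k-a_j}$ is a nonnegative integer once $\check z$ and $\hat z^k$ bracket the exponents, and then propagate the incumbent displacement through the iterations (you by a secondary induction on $j$, the paper by a single telescoping sum $\bar x^k=\bar x^0+\sum_i\bar\delta^{\,i}Dz^i$; your explicit separation into a ``scale inclusion'' and a ``centering condition'' is if anything a little cleaner, and your reduction to $\mathbb{Z}^n$ via $D=[I_n\ {-I_n}]$ is an inessential specialization). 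The one substantive divergence is your choice $\check z=\min\{a_0,\dots,a_k\}$. This does prove the statement as literally quantified, and it makes \Cref{asm:1} superfluous for the lemma itself, but the paper's notation ($\check z$ without a superscript, versus $\hat z^k$ with one) signals that $\check z$ is meant to be independent of $k$: the paper obtains it as a uniform lower bound on the exponents from the compactness of $\compact$, and that uniformity is precisely what is consumed downstream, since \Cref{mesh:in:esh} sets $\muu=p^{\check z}\bar\delta^{\,0}/\|G^{-1}\|$ and needs it to be one fixed constant valid at every iteration. With a $k$-dependent $\check z$ you would have to reintroduce the compactness argument there, so you have deferred rather than eliminated the only place where \Cref{asm:1} actually enters.
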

\begin{proof}
Let $\check{z} \in \Z$ be a finite integer such that 
$$\left \{x\in \Omega :f(x)\leq f(x^0)\right\} \ \subseteq \ \left \{ 
 x\in \Omega:\|x-x^0\|< \tau^{\check{z}}\bar\deltA^0\right  \}.$$
The integer $\check{z}$ exists and is finite using~\Cref{asm:1} (which states that $\compact $ is bounded).

For a given iteration $k$ of \orthomads{}, 
 let $\hat z^k\in \Z$ be the integer satisfying \[\tau^{\hat z^k}\bar\deltA^0 =  \min\{ \bar\deltA^i \,:\, i \in \{0, 1,\ldots, k\}\}.\]
The integer $\hat z^k$ exists because $\tau \in (0,1)$ and
it changes at iterations where the smallest mesh size parameter encountered so far is decreased. 
 
With this notation, the mesh size parameter at iteration $i$ can be rewritten as $\bar \deltA^i= \bar\deltA^0(\tau)^{r^i}= \bar\deltA^0 \left( p/q \right)^{r^i}$ with $r^i \in \{\check{z},\check{z}+1,\ldots,\hat z^i\}$, moreover since $x^{i+1}=x^i+\deltA^iDz^i$ for some integer vector $z^i\in \N^p$ it follows that for all $k\geq 1$
\begin{align*}
    \bar x^k \ = \ \bar x^0 + \sum_{i=0}^{k-1} \bar\delta^i D z^i
            \ = \ \bar x^0 + \bar\delta^0 D \sum_{i=0}^{k-1} (\tau)^{r^i} z^i
            \ = \ \bar x^0 + \frac{p^{\check{z}}}{q^{\hat z^k}} \bar\delta^0 D \sum_{i=0}^{k-1} (p)^{r^i - \check{z}} (q)^{\hat z^k - r^i} z^i.
\end{align*}
Since $\sum_{i=0}^{k-1} (p)^{r^i - \check{z}} (q)^{\hat z^k - r^i} z^i$ is integer, all  evaluations belong to the mesh generated by the directions of the columns of the matrix $\left(\frac{p^{\check{z}}}{q^{\hat z^k}} \bar\delta^0 G\right)Z$, thus

$    \displaystyle\cup_{j\leq k}\mathbb{M}^j 
        \  \subseteq  \ \left\{\bar x^0+ \frac{p^{\check{z}}}{q^{\hat z^k}}\bar\deltA^0GZy:y\in \N^p \right\}.
$
\end{proof}

All evaluated points up to the start of iteration $k$ 
belong to the mesh generated by the columns of the matrix $\frac{p^{\check{z}}}{q^{\hat z^k}}\bar\deltA^0GZ$. 
The next proposition provides a lower bound on the distance between two points visited by \orthomads{}.

\begin{proposition}
    \label[proposition]{mesh:in:esh} 
Let \Cref{asm:1} hold and consider \orthomads{} with $\tau=\tfrac{p}{q} \in \mathbb{Q}$. 
Then, there exists a scalar $\muu >0$ such that for all iteration of index $k$:
\[
\|x - y\| \ \geq  \ \muu \, \frac{1}{q^{\hat z^k}} \, \bar\deltA^k \quad \text{ for all } x, y \in \bigcup_{j \leq k} \mathbb{M}^j \text{ such that } x \neq y.
\]
where for any $k$, $\hat z^k \in \Z$ denotes the integer satisfying $\tau^{\hat z^k}\bar\deltA^0 =  \min\{ \bar\deltA^i \,:\, i \in \{0, 1,\ldots, k\}\}$.
\end{proposition}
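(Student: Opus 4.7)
The plan is to combine the lattice embedding from \Cref{lem:finner:mesh} with the integer structure of the OrthoMADS matrix $GZ = [I_n\ -I_n]$, and then to close the argument with a uniform upper bound on $\bar\delta^k$ that I will derive from the compactness of $\compact$.

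First, I invoke \Cref{lem:finner:mesh}: any two distinct $x, y \in \bigcup_{j \leq k} \mathbb{M}^j$ can be written as $x - y = \tfrac{p^{\check z}}{q^{\hat z^k}}\,\bar\delta^0\, GZ w$ for some $w \in \mathbb{Z}^p$ with $GZw \neq 0$. For OrthoMADS, $GZ=[I_n\ -I_n]$ sends $\mathbb{Z}^{2n}$ into $\mathbb{Z}^n$, so $GZw$ is a nonzero integer vector and hence $\|GZw\|\geq \mu_0$ for some absolute constant $\mu_0 > 0$ (equal to $1$ for the $\ell_2$, $\ell_\infty$, or $\ell_1$ norm; positive for any norm by equivalence of norms on $\mathbb{R}^n$). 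This yields the preliminary estimate
\[
\|x - y\| \;\geq\; \frac{\mu_0\, p^{\check z}\,\bar\delta^0}{q^{\hat z^k}}.
\]

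The main technical obstacle is then to replace the constant $\mu_0 p^{\check z}\bar\delta^0$ in this bound by $\mu\bar\delta^k$, which requires a uniform upper bound on $\bar\delta^k$. This is not automatic, because \increaset{} multiplies $\bar\delta^k$ by $\tau^{-1}>1$ upon every successful iteration. To establish such a bound, I will set $R:=\mathrm{diam}(\compact)$, which is finite by \Cref{asm:1}, and argue as follows: whenever $\bar\delta^k\mu_0 > R$, every point of $\mathbb{M}^k\setminus\{\bar x^k\}$ lies at distance strictly greater than $R$ from $\bar x^k\in\compact$ and is therefore outside $\compact$; since any improving trial point $t$ necessarily satisfies $\f(t)<f(\bar x^k)\leq f(x^0)$ and must therefore belong to $\compact$, no improvement is possible at such an iteration, so the update rule forces $\bar\delta^{k+1}=\tau\bar\delta^k<\bar\delta^k$. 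A short induction then yields $\bar\delta^k\leq M:=\max\{\bar\delta^0,\,R/(\mu_0\tau)\}$ for every $k$.

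Finally, taking $\mu:=\mu_0\, p^{\check z}\,\bar\delta^0/M>0$, the two bounds combine to give
\[
\|x-y\| \;\geq\; \frac{\mu_0\, p^{\check z}\,\bar\delta^0}{q^{\hat z^k}} \;=\; \frac{\mu M}{q^{\hat z^k}} \;\geq\; \frac{\mu\,\bar\delta^k}{q^{\hat z^k}},
\]
which is the desired estimate. The genuinely delicate step is the uniform upper bound on $\bar\delta^k$, which does not follow from the update rules in isolation but instead from the geometric observation that a mesh coarser than the diameter of $\compact$ cannot produce a feasible improving trial point.
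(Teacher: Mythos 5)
Your proof is correct, and it is in fact more careful than the paper's own argument. The two proofs share the same first step: \Cref{lem:finner:mesh} embeds $\bigcup_{j\le k}\mathbb{M}^j$ into the lattice $\bigl\{\bar x^0+\tfrac{p^{\check z}}{q^{\hat z^k}}\bar\delta^0GZy : y\in\N^p\bigr\}$, whose minimal separation between distinct points is $\mu_0\,p^{\check z}\bar\delta^0/q^{\hat z^k}$ with $\mu_0=1/\|G^{-1}\|$ (your integer-vector argument for $GZ=[I_n\ -I_n]$ is the special case $\|G^{-1}\|=1$). The paper stops there: it invokes \cite[Lemma~3.2]{AuDe03a} and states the separation as $\bar\delta^k\big/\bigl\|\bigl(\tfrac{p^{\check z}}{q^{\hat z^k}}\bar\delta^0G\bigr)^{-1}\bigr\|$, which silently inserts a factor $\bar\delta^k$ where the unit mesh-size parameter of the embedding lattice belongs; that is a valid lower bound only when $\bar\delta^k\le 1$, which the update rules do not guarantee since every successful iteration multiplies $\bar\delta^k$ by $\tau^{-1}>1$. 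Your second step --- bounding $\bar\delta^k\le M:=\max\{\bar\delta^0,\operatorname{diam}(\compact)/(\mu_0\tau)\}$ by observing that a mesh whose non-central points all lie outside $\compact$ cannot furnish a feasible improving trial point, so such an iteration must be unsuccessful --- is exactly the ingredient needed to convert the $k$-independent lattice bound into one proportional to $\bar\delta^k$. The price is that the constant becomes $\mu=\mu_0\,p^{\check z}\bar\delta^0/M$ rather than the $\mu=p^{\check z}\bar\delta^0/\|G^{-1}\|$ displayed in \eqref{exclusionmads}; the existential claim of the proposition is unaffected, but the specific value of $\mu$ is reused downstream (e.g., the initialization $\delta^0=\mu\bar\delta^0$ in \Cref{tab:mads_ads}), where your smaller constant is the one that should be carried. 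Two details you rely on do check out: an improving trial point lies in $\compact$ because the extreme barrier forces feasibility and $f(\bar x^k)\le f(\bar x^0)$; and using the same $\mu_0$ both for lattice increments $GZw$ with $w\in\Z^p$ and for mesh offsets $GZy$ with $y\in\N^p$ is consistent, since $[I_n\ -I_n]$ maps both $\Z^{2n}$ and $\N^{2n}$ onto $\Z^n$.
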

\begin{proof}
\Cref{lem:finner:mesh} ensures that
the set $\displaystyle\cup_{j\leq k}\mathbb{M}^j$ is included in a mesh centered at $\bar x^0$ generated by the columns of $\frac{p^{\check{z}}}{q^{\hat z^k}}\bar\deltA^0GZ$ that only depend on $k$.
Replacing $G$ by $\frac{p^{\check{z}}}{q^{\hat z^k}}\bar\deltA^0G$ in~\cite[Lemma~3.2]{AuDe03a}
    guarantees that any distinct pair of points $x, y \in \displaystyle\cup_{j\leq k}\mathbb{M}^j$ satisfies
 \begin{equation}
     \|x-y\|\ \geq \ \displaystyle \frac{\bar\deltA^k}{\Big\|  \left(\frac{p^{\check{z}}}{q^{\hat z^k}}\bar\deltA^0G \right)^{-1}\Big\|} 
     \ = \ \muu\frac{1}{q^{\hat z^k}} \bar\deltA^k
 \quad \mbox{ where } \ \muu =   \frac{p^{\check{z}}\bar\deltA^0}{\|G^{-1}\|} \mbox{ .} 
     \label{exclusionmads}
 \end{equation}
\end{proof}

Let us now construct a specific instance of \ads{} that generates the same sequence of trial points as \orthomads{}. The starting point, the exclusion parameter, and the decrease/increase rules are presented in the top part of \Cref{tab:mads_ads}. The final theorem of this section confirms the bottom part of the table; that is, the frame size parameters coincide, the \ads{} rules allow the use of the exact same search and poll sets as \orthomads{}, and the sequence of poll centers is identical.

The key to this construction is that the \ads{} exclusion size parameter $\deltA^k$ is aggressively reduced; it is multiplied by $\tau/q$, rather than by $\tau$ as in \orthomads{}.
This ensures that all points evaluated by \orthomads{} also belong to the punctured space $\esh$ parameterized by the exclusion radius $\bar\deltA^k$ of the corresponding \ads{} instance. 
The parameter $\nunk$, introduced in~\Cref{tab:mads_ads}, represents the number of unsuccessful iterations up to the start of iteration $k$.
It ensures that $\bar\DeltA^k$ remains equal to $\DeltA^k$, even though $\deltA^k$ decreases faster than $\bar\deltA^k$, as shown in the next lemma. This alignment guarantees that the poll sets of \ads{} can be chosen to be the same as those of \orthomads{}.

\begin{table}[htb!]
    \centering \small
    \renewcommand{\arraystretch}{1.5} 
    \begin{tabular}{|l||>{\centering\arraybackslash}p{4cm}|>{\centering\arraybackslash}p{7.8cm}|}
        \hline
         & \textbf{\orthomads{}} & \textbf{ADS} \\
         \hline
         \hline
        Starting point & \multicolumn{2}{c|}{$ \bar x^0 = x^0$ \hspace*{34mm}} \\
        \hline
       Initial mesh/exclusion  & $\bar\deltA^0$ & $\deltA^0=\muu \bar \deltA^0$ 
       where $\muu$ is from~\eqref{exclusionmads}\\
        \hline 
        Decrease rule & Equation~\eqref{decrease:MADS} &  $\left(\max\left\{\sqrt{\dfrac{\tau}{\muu} q^{\nunk}\deltA^k}, \dfrac{\tau}{\muu}q^{\nunk}\deltA^k\right\},\, \dfrac{\tau}{q} \deltA^k\right)$ \\
        \hline
        Increase rule &
        Equation~\eqref{increase:MADS} &
        $\left(\max\left\{\sqrt{\dfrac{\tau^{-1}}{\muu} q^{\nunk}\deltA^k}, \dfrac{\tau^{-1}}{\muu}q^{\nunk}\deltA^k\right\},\, \tau^{-1} \deltA^k\right)$\\
        \hline
        \hline
        Frame size & \multicolumn{2}{c|}{$\bar \DeltA^k = \DeltA^k$ \hspace*{34mm}} \\
        \hline
        Search set & \multicolumn{2}{c|}{$\bar{\mathbb{S}}^k= \mathbb{S}^k  \subset \mesh $ 
        (with the same ordering)
        \hspace*{14mm}} \\
        \hline
        Poll center & \multicolumn{2}{c|}{$\bar x^k = p^k=x^k$ \hspace*{34mm}} \\
        \hline 
        Poll set & \multicolumn{2}{c|}{$\bar \P^k  = \P^k$ (with the same ordering and using the $\ell_\infty$-norm) \hspace*{4mm}} \\
        \hline
    \end{tabular}
    \caption{The parameters of \orthomads{} (with the bars)
        and those of an instance of \ads{} that define identical algorithms. For both \orthomads{} and \ads{}, the increase and decrease update rules use $\tau =\frac{p}{q} \in (0,1)$ with $p\in \N$ and $q \in \N^*$. 
        The integer $\nunk \in \N$ represents the number of unsuccessful iterations up to iteration $k$.}
    \label{tab:mads_ads}
\end{table}

The following lemma is needed to prove, by induction, that the instance of \ads{} described in Table~\ref{tab:mads_ads} generates the sequence of trial points as \orthomads{}.
\begin{lemma} 
   Assume that \Cref{asm:1} holds and consider \orthomads{} with $\tau = \frac{p}{q} \in \mathbb{Q}$ and the instance of \ads{} defined in \Cref{tab:mads_ads}. 
   If $\bar{\delta}^k = \frac{1}{\mu} q^{n_u^k} \delta^k$ and $\bar{\Delta}^k = \Delta^k$ 
    at some iteration $k \in \mathbb{N}$,
    and if \ads{} and \orthomads{} produce the same outcome at iteration $k+1$, 
    then it follows that
\[
\bar{\delta}^{k+1} \ = \ \frac{1}{\muu}\,q^{\nunkp1}\,\deltA^{k+1} \quad \text{ and } \quad \bar{\Delta}^{k+1} \ = \ \Delta^{k+1}.
\]
    \label{lem:eqofdelta}
\end{lemma}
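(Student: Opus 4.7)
The plan is to proceed by case analysis on the status (successful or unsuccessful) of iteration $k+1$, using the induction hypothesis $\bar{\delta}^k = \tfrac{1}{\muu} q^{\nunk} \deltA^k$ and $\bar{\Delta}^k = \Delta^k$ together with the explicit update rules in \Cref{tab:mads_ads}. The only piece of bookkeeping needed beforehand is the relation between $\nunk$ and $\nunkp1$: an unsuccessful iteration $k+1$ gives $\nunkp1 = \nunk + 1$, while a successful iteration $k+1$ gives $\nunkp1 = \nunk$. Since \ads{} and \orthomads{} produce the same outcome at iteration $k+1$ by assumption, the two algorithms enter the same branch of their respective update rules.

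Consider first the \emph{unsuccessful} case, where both algorithms apply their decrease rule. The \orthomads{} rule gives $\bar{\delta}^{k+1} = \tau \bar{\delta}^k$, whereas \ads{} yields $\deltA^{k+1} = \tfrac{\tau}{q}\deltA^k$. Substituting into the candidate identity,
\begin{equation*}
\tfrac{1}{\muu} q^{\nunkp1} \deltA^{k+1} \ = \ \tfrac{1}{\muu} q^{\nunk + 1} \cdot \tfrac{\tau}{q} \deltA^k \ = \ \tau \cdot \tfrac{1}{\muu} q^{\nunk} \deltA^k \ = \ \tau \bar{\delta}^k \ = \ \bar{\delta}^{k+1},
\end{equation*}
using the induction hypothesis. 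For the frame size parameter, the \ads{} decrease rule gives $\Delta^{k+1} = \max\{\sqrt{\tfrac{\tau}{\muu} q^{\nunk}\deltA^k},\, \tfrac{\tau}{\muu} q^{\nunk}\deltA^k\}$, and by induction $\tfrac{\tau}{\muu} q^{\nunk}\deltA^k = \tau \bar{\delta}^k$, so this equals $\max\{\sqrt{\tau \bar{\delta}^k},\, \tau \bar{\delta}^k\} = \bar{\Delta}^{k+1}$.

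For the \emph{successful} case, both algorithms apply their increase rule, and $\nunkp1 = \nunk$. The \orthomads{} rule gives $\bar{\delta}^{k+1} = \tau^{-1} \bar{\delta}^k$ and \ads{} gives $\deltA^{k+1} = \tau^{-1} \deltA^k$, so
\begin{equation*}
\tfrac{1}{\muu} q^{\nunkp1} \deltA^{k+1} \ = \ \tau^{-1} \cdot \tfrac{1}{\muu} q^{\nunk} \deltA^k \ = \ \tau^{-1} \bar{\delta}^k \ = \ \bar{\delta}^{k+1}.
\end{equation*}
The frame size identity follows analogously: $\tfrac{\tau^{-1}}{\muu} q^{\nunk}\deltA^k = \tau^{-1} \bar{\delta}^k$, so the two $\max\{\sqrt{\cdot},\,\cdot\}$ expressions coincide, yielding $\Delta^{k+1} = \bar{\Delta}^{k+1}$.

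There is no real obstacle here; the only thing to watch is that the two formulas for $\delta^{k+1}$ differ by an extra factor of $q^{\pm 1}$ precisely to absorb the change in $q^{\nunkp1 - \nunk}$ from the ratio $\bar{\delta}^{k+1}/\delta^{k+1}$. This asymmetry (division by $q$ on decrease, no $q$-factor on increase) is exactly compensated by the fact that $\nunk$ only increments on unsuccessful iterations, which is why the normalization by $q^{\nunk}$ in the hypothesis is the right invariant to propagate.
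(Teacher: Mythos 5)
Your proposal is correct and follows essentially the same route as the paper's proof: a case split on whether iteration $k+1$ is successful or unsuccessful, substituting the respective \increase/\decrease rules of \orthomads{} and of the \ads{} instance from \Cref{tab:mads_ads} into the induction hypothesis, with the factor $q^{\pm 1}$ absorbed by the increment of $\nunk$. Your closing remark on why the $q^{\nunk}$ normalization is the right invariant is a nice addition, and your chain of equalities is in fact cleaner than the paper's (which contains a typographical slip in an intermediate step of the successful case).
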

\begin{proof}
Consider a given iteration $k \in \mathbb{N}$ of \ads{} and \orthomads{} using $\tau=\frac{p}{q}$, such that $\bar{\delta}^k = \frac{1}{\mu} q^{n_u^k} \delta^k$ and $\bar{\Delta}^k = \Delta^k$. Assume also that \ads{} and \orthomads{} produce the same outcome at iteration $k+1$ (i.e., iteration $k+1$ is either successful for both methods or unsuccessful for both).

Then two scenarios may occur at iteration $k+1$. The first one is where the iteration $k+1$ is successful for both methods. In this case, it follows that $\nunkp1=\nunk$,  $\deltA^{k+1} =\tau^{-1}\deltA^k$ and $\bar \deltA^{k+1} =\tau^{-1}\bar \deltA^k$. Hence, by using the fact that $\bar{\delta}^k = \frac{1}{\mu} q^{n_u^k} \delta^k$, we get 
\begin{equation*}
    \bar \deltA^{k+1} =\tau^{-1}\bar \deltA^k = \dfrac{\tau^{-1}}{\muu}q^{\nunk}\deltA^k 
    = \dfrac{\tau^{-1}}{\muu}q^{\nunkp1}\deltA^k = \dfrac{\tau^{-1}}{\muu}q^{\nunkp1}\tau^{-1}\deltA^k
    = \dfrac{1}{\muu}q^{\nunkp1}\deltA^{k+1},
\end{equation*}
and
\begin{equation*}
    \bar \DeltA^{k+1} = \max\left\{\sqrt{\tau^{-1} \bar\deltA^k}, \ \tau^{-1} \bar\deltA^k\right\} 
    = \max\left\{\sqrt{ \tfrac{\tau^{-1}}{\muu}q^{\nunk}\deltA^k}, \ \tfrac{\tau^{-1}}{\muu}q^{\nunk}\deltA^k\right\} = \DeltA^{k+1}.
\end{equation*}
The second scenario is when the iteration $k+1$ of both methods is unsuccessful. In this case,  $\nunkp1=\nunk+1$,  $\deltA^{k+1} =\dfrac{\tau}{q}\deltA^k$ and $\bar \deltA^{k+1} =\tau\bar \deltA^k$. Hence, by by using the fact that $\bar{\delta}^k = \frac{1}{\mu} q^{n_u^k} \delta^k$, we get 
\begin{align*}
    \bar \deltA^{k+1} =\tau\bar \deltA^k = \tau\dfrac{1}{\muu}q^{\nunk}\deltA^k 
    = \dfrac{\tau}{q}\dfrac{1}{\muu}q^{(\nunk+1)}\deltA^k = \dfrac{1}{\muu}q^{\nunkp1}\dfrac{\tau}{q}\deltA^k
    &= \dfrac{1}{\muu}q^{\nunkp1}\deltA^{k+1}
\end{align*}
and
\begin{equation*}
    \bar \DeltA^{k+1} = \max\left\{\sqrt{\tau \bar\deltA^k}, \ \tau \bar\deltA^k\right\} 
    = \max\left\{\sqrt{\tau \tfrac{1}{\muu}q^{\nunk}\deltA^k}, \ \tau\tfrac{1}{\muu}q^{\nunk}\deltA^k\right\} = \DeltA^{k+1}.
\end{equation*}
\end{proof} 

    The next theorem shows that the instances of \ads{} and \orthomads{} defined in \Cref{tab:mads_ads} produce the same sequence of trial points at every iteration. The proof demonstrates that the rules of \ads{} are flexible enough to generate the same search and poll sets as \orthomads{}; that is, it establishes that the equalities $\bar{\mathbb{S}}^k = \mathbb{S}^k$ and $\bar{\mathbb{P}}^k = \mathbb{P}^k$ can be enforced.

\begin{theorem}\label{thm:ADS2OrthoMADS}
 Assume that \Cref{asm:1} holds and consider \orthomads{} with $\tau = \frac{p}{q} \in \mathbb{Q}$ and the instance of \ads{} defined in \Cref{tab:mads_ads}. 
Then,  this \ads{} instance and \orthomads{} produce the same sequence of trial points. Namely, for any iteration $k\in \N$,
\begin{equation} 
\label{eq-induction}
\barV^k \ = \ \V^k, \qquad 
\bar \deltA^k \ = \ \frac{1}{\muu}\,q^{\nunk}\,\deltA^k \quad \mbox{ and }\quad
\bar\DeltA^k \ = \ \DeltA^k 
\end{equation}
where $\nunk$ is the number of unsuccessful iterations up to the start of iteration $k$. 
\end{theorem}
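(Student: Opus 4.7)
My approach is an induction on the iteration index $k$, verifying at each step all three equalities claimed in~\eqref{eq-induction}. For the base case $k=0$, everything is immediate: both algorithms start at the same $x^0$ with $\V^0=\barV^0=\{x^0\}$; the initial frame size is shared by hypothesis, so $\DeltA^0 = \bar\DeltA^0$; and since no iterations have yet occurred, $\nunk$ equals zero, so the claimed relation reduces to $\bar\deltA^0 = \frac{1}{\muu}\deltA^0$, which is exactly the initialization $\deltA^0 = \muu \bar\deltA^0$ prescribed in~\Cref{tab:mads_ads}.

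For the inductive step, assume the three equalities hold at iteration $k$ and that both algorithms have made identical choices up to that point. The crux is showing that the \orthomads{} search set $\bar{\mathbb{S}}^k$ and poll set $\bar\P^k$, both subsets of $\mathbb{M}^k \subseteq \cup_{j\leq k}\mathbb{M}^j$, are admissible choices for the corresponding \ads{} instance, i.e., that every such point not already in $\barV^k$ lies in the punctured space $\esh$. By~\Cref{mesh:in:esh}, any two distinct points of $\cup_{j\leq k}\mathbb{M}^j$ are separated by at least $\muu\,q^{-\hat z^k}\bar\deltA^k$. The key inequality is $\hat z^k \leq \nunk$: indeed $\hat z^k$ measures the largest net excess of unsuccessful over successful iterations encountered up to $k$, a quantity that is bounded above by the total count of unsuccessful iterations. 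Combining this with the induction hypothesis $\bar\deltA^k = \frac{1}{\muu}q^{\nunk}\deltA^k$ yields
\begin{equation*}
    \muu\,q^{-\hat z^k}\bar\deltA^k \ \geq \ \muu\,q^{-\nunk}\bar\deltA^k \ = \ \deltA^k,
\end{equation*}
so all points of $\cup_{j\leq k}\mathbb{M}^j$ that are not already in $\V^k=\barV^k$ belong to $\esh$. This justifies setting $\mathbb{S}^k = \bar{\mathbb{S}}^k$ and $\P^k = \bar\P^k$ as prescribed in~\Cref{tab:mads_ads}.

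With identical trial sets evaluated in the same order on the common objective $\f$, both algorithms record the same function values at every point, thus selecting the same poll center ($p^k = \bar x^k = x^k$) and reaching the same success/failure verdict at iteration $k+1$. Consequently $\V^{k+1}=\barV^{k+1}$, and the preconditions of~\Cref{lem:eqofdelta} are met; that lemma then propagates both $\bar\deltA^{k+1} = \frac{1}{\muu}q^{\nunkp1}\deltA^{k+1}$ and $\bar\DeltA^{k+1}=\DeltA^{k+1}$, closing the induction.

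The principal obstacle is the separation argument underlying the inclusion of the \orthomads{} mesh in the \ads{} punctured space. The \ads{} exclusion radius $\deltA^k$ is deliberately shrunk at the faster rate $\tau/q$ per failure (as opposed to $\tau$ for $\bar\deltA^k$) precisely so that the factor $q^{\nunk}$ appearing in the hypothesized relation $\bar\deltA^k = \frac{1}{\muu}q^{\nunk}\deltA^k$ absorbs the factor $q^{\hat z^k}$ coming from the geometric bound of~\Cref{mesh:in:esh}. Pinning down $\hat z^k \leq \nunk$ and recognising that the tuned update rules of~\Cref{tab:mads_ads} are exactly what makes these two counters compatible is the only non-routine part of the argument; the remainder is direct verification.
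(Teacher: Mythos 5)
Your proof is correct and follows essentially the same route as the paper: induction on $k$, with the key step being the separation bound of \Cref{mesh:in:esh} combined with the inequality $\hat z^k \leq \nunk$ and the induction hypothesis $\bar\deltA^k = \frac{1}{\muu}q^{\nunk}\deltA^k$ to show that the \orthomads{} trial points not already visited lie in $\esh$, followed by an appeal to \Cref{lem:eqofdelta} to propagate the parameter relations. The only difference is cosmetic (your justification of $\hat z^k \leq \nunk$ via the net excess of unsuccessful over successful iterations, and a lighter treatment of why the \orthomads{} poll directions are admissible \ads{} poll directions of norm $\DeltA^k$), so no substantive divergence from the paper's argument.
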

\begin{proof} 
The proof of \eqref{eq-induction} proceeds by induction on the iteration index $k$. 
The initialization in \Cref{tab:mads_ads} and the fact that
 $\bar{\mathbb{V}}^0 = \{\bar{x}^0\}$ and $\mathbb{V}^0 = \{x^0\}$ ensure that \eqref{eq-induction} holds for the initial iteration $k=0$.
  
By induction, suppose that \eqref{eq-induction} holds for an iteration $k \in \N$.
\Cref{mesh:in:esh} ensures that any pair of distinct points $x, y \in \barV^{k+1} \subseteq \cup_{j \leq k} \mathbb{M}^j $ satisfy
$ \|x-y\|  \geq  \muu  \frac{1}{q^{\hat z^k}} \bar\deltA^k$,    
where $\hat z^k \in \Z$ denotes the integer satisfying $\tau^{\hat z^k}\bar\deltA^0 =  \min\{ \bar\deltA^i \,:\, i \in \{0, 1,\ldots, k\}\}$. 
Since $\hat z^{k}$ is incremented only when an unsuccessful iteration leads to the smallest mesh size parameter encountered so far, and since $\nunk$ is incremented at every unsuccessful iteration before iteration $k$, it follows that $\nunk  \geq \hat{z}^{k}$.
Therefore, the pair of points of $\barV^{k+1}$ satisfy
$$ \|x-y\|  \ \geq \ \muu\, \frac{1}{q^{\hat z^k}}\, \bar\deltA^k 
            \ \geq \ \muu\, \frac{1}{q^{\nunk}}\, \bar\deltA^k \ = \ \delta^k.$$
The last equality follows from the induction hypothesis.
This implies that   
\begin{equation}
    \barV^{k+1} \  =\  
    \barV^k \cup 
    \bar{\mathbb{S}}^k_{\text{eval}} \cup  
    \bar{\mathbb{P}}^k_{\text{eval}} \ =\ 
    \V^k \cup 
    \bar{\mathbb{S}}^k_{\text{eval}} \cup  
    \bar{\mathbb{P}}^k_{\text{eval}} 
    \ \subset\ 
    \V^{k} \cup \esh,
    \label{eq:barv}
\end{equation}
    i.e., all points visited by \orthomads{} during iteration $k$ 
    belong to $\esh$ or were previously visited by \ads{} by the start of the iteration. 
    
The search points in \ads{} may be chosen to match those of \orthomads{}, 
    i.e., one may set $\mathbb{S}^{k+1}=\bar{\mathbb{S}}^{k+1}$, 
    and thus $\mathbb{S}_{\text{eval}}^{k+1}=\bar{\mathbb{S}}_{\text{eval}}^{k+1}$.
Equation \eqref{eq:barv} implies that every search point at iteration $k+1$ either belongs to $\esh$ or $\V^{k}$, or that the poll center of the \ads{} instance is $p^k=\bar x^k = x^k$ and no additional poll are conducted for \ads{}.

\ads{} permits the poll points to coincide with those of \orthomads{}, that is, it allows setting $\mathbb{P}^{k+1} = \bar{\mathbb{P}}^{k+1}$. 
In fact, the \orthomads{} poll points lie on the boundary of the frame: any direction
$d\in\R^n$ with  
$\bar x^k+d\in\bar{\mathbb{P}}^{k}$ satisfies $\|d\|_\infty=\bar\Delta^k$.
Since $\bar\Delta^k=\Delta^k$, the same direction is admissible for the
\ads{} poll because $x^k+d$ is at distance $\Delta^k$ from $x^k$ and
$p^k=x^k$.  
 Moreover, \eqref{eq:barv} shows that every point in
$\barV^{k+1}$ either belongs to $\esh$ or was
evaluated earlier, hence as
\begin{equation*}
    \P_{\text{eval}}^{k+1} \ \subseteq \ \bar{\P}_{\text{eval}}^{k+1} \ \subseteq \ \barV^{k+1}. 
\end{equation*}
All poll points from \ads{} will either be evaluated (because they belong to $\esh$) or have already been evaluated (because they belong to $\V^k$). Consequently, the equality $\P_{\text{eval}}^{k+1} = \bar{\P}_{\text{eval}}^{k+1}$ holds.
Since all search and poll points are identical for both algorithms, it follows that
$\barV^{k+1}=\mathbb{V}^{k+1}$ and the iteration outcome
(successful or unsuccessful) is identical. 
\Cref{lem:eqofdelta} ensures that $\bar \deltA^{k+1} = \frac{1}{\muu}q^{\nunk}\deltA^{k+1}$ and $\bar \DeltA^{k+1} = \DeltA^{k+1}$.  
This completes the induction as it shows that~\eqref{eq-induction} holds for $k+1$.
\end{proof}

\Cref{thm:ADS2OrthoMADS} shows that \orthomads{} is a particular instance of \ads{}. The same result holds for \qrmads{}, as the only difference lies in the way the set of directions $\bar{\mathbb{D}}^k$ is generated using a QR decomposition rather than a Householder transformation. The remainder of the proof is identical to the one above.

\section{Computational results}
\label{sec-Numerical}

This section compares the performance of \ads{} with a mesh-based (\mads{}) and a sufficient decrease based (\sdds{}) method over four sets of computational experiments. 
The first experiment consists of the Mor\'e and Wild~\cite{MoWi2009} (\texttt{M\&W}) set of unconstrained optimization problems on its smooth and nonsmooth versions.
These problems are characterized by their fast evaluation times, making them suitable for extensive benchmarking. 
However, they are primarily synthetic in nature and less representative of the complexities encountered in real-world industrial applications. 
The second experiment studies the effect of the search step on a subset of the \texttt{CUTEst}~\cite{cutest} collection of constrained problems, which are also fast to evaluate but still not representative of real-world problems.
The third experiment focuses on the effect of the search step on the tenth instance of the  \texttt{SOLAR}~\cite{solar_paper} collection,
noted \texttt{SOLAR10}. 
It is representative of a real-world application with its complexities, and has no constraints other than bounds. 
The fourth and last problem is the constrained optimization of a Multidisciplinary Design Optimization (MDO) problem for the design of a simplified wing~\cite{TriDuTre04a}. This application is similar to real-world problems, it includes constraints, and is denoted as {\tt Simplified-Wing}.

\subsection{Implementation details}
The implementations of \ads{}, \mads{}, and \sdds{} are all carried out in a common Python framework to ensure that all algorithms share the same code base and rely on identical tools, allowing for a fair and consistent comparison.
Each algorithm begins with the initial parameters set as $\deltA^0 = \DeltA^0 = 1$ 
(the \mads{} parameters are identical to their counterpart, e.g.,
 $\bar \delta_0= \delta_0$ and $\bar \DeltA_0 = \DeltA_0$).
The update rules for the frame and exclusion size parameters at iteration~$k$ are given by Equations~\eqref{decrease:rule:implementation} and~\eqref{increase:rule:implementation}. 
\sdds{} operates similarly to \ads{}, but replaces the punctured space criterion with a sufficient decrease criterion to accept new trial points, with the associated forcing function set to $\rho(\deltA^k) = 10^{-2}(\deltA^k)^2$. Moreover, \sdds{} does not poll around a search point that has not satisfied a sufficient decrease criterion during the search step. 

Two variants of each algorithm are tested: one without a search step and one employing a rudimentary quadratic search, inspired by the one developed in~\cite{CoLed2011} and implemented into the \nomad solver~\cite{nomad4paper}\footnote{\url{https://github.com/bbopt/nomad}}.
In the latter, a quadratic interpolation model is constructed using previously evaluated points around the current best known solution. 
Points are selected first at a distance of \( 2\DeltA^k \). If fewer than \( (n+1)(n+2)/2 \) 
visited points are found to construct the quadratic model, the selection is repeated at a distance of \( 4\DeltA^k \), and finally at \( 8\DeltA^k \) if still insufficient. If not enough points are available to build the model, the search step returns no candidate.

In the unconstrained case, the quadratic model is optimized with the COBYLA algorithm~\cite{cobyla94} over the region $\{x\in \R^n:\|p^k-x\|_\infty \leq  \max_{y\in C^k}\|p^k-y\|_\infty\}$ with a budget of $5000$ evaluations.
For the constrained case, quadratic models of each constraint are constructed alongside that of the objective function. 
A trust region algorithm named {\tt trust-constr} from the python library {\tt SciPy} with a budget of $5000$ evaluations is applied to minimize the quadratic model of the objective subject to the quadratic models of the constraints.  

For each algorithm, in both the unconstrained and constrained cases, the search set $\mathbb{S}^k$ contains at most one point.
Poll directions are generated using a Householder decomposition to form a positive basis as in~\cite[Theorem~8.5]{AuHa2017}. In the case of \mads{}, we employ the \orthomads{} $2n$ variant, where poll points are generated strictly on the boundary of a frame also using Householder, and the mesh points are defined using cardinal directions.
One strength of \ads{} lies in its low number of tunable parameters. In this work, the same parameter choices as in \mads{} and \sdds{} are retained, with the exception of the forcing function, which is not used in \ads{}. Experiments with different norm choices for defining the punctured space $\esh$ indicate that the performance is not sensitive to this choice. 
As a result, the $\ell_2$-norm is adopted throughout. 

On the simple one-dimensional examples from~\Cref{subsec:limitations}, the performance of \ads{} matches that of \mads{} on $f_1$ and that of \sdds{} on $f_2$. In each case, \ads{} coincides with the best-performing method.

\subsection{Unconstrained optimization without a search step}

The \texttt{M\&W} test set~\cite{MoWi2009} contains $53$ unconstrained analytical problems with dimensions ranging from $2$ to $12$.
The objective function value involves the norm of a real vector of dimension $2$ to $65$. 
Our experiments use the $\ell_2$-norm (yielding smooth problems) and the $\ell_1$-norm (producing nonsmooth ones). 
These synthetic, inexpensive problems provide a diverse and controlled testbed for comparing algorithms performance across smooth and nonsmooth settings.

\Cref{fig:MW_no_search} shows data profiles~\cite{MoWi2009} to compare the three algorithms with empty search steps, on both smooth and nonsmooth versions of the \texttt{M\&W} problems. 
All tests are repeated using $20$ random seeds.
In this context, a \textit{problem} refers to a specific optimization problem, whereas an \textit{instance} denotes a single run of the algorithm on a given problem with a specific random seed. 
Each of the $53$ problems in the \texttt{M\&W} test set is solved using $20$ distinct seeds, 
 yielding a total of $53 \times 20$ instances in the data profiles. 
The profiles are constructed using the accuracy value $\frac{f(x^N)-f^*}{f(x^0)-f^*}$,
    where $f(x^N)$ is objective function value of the best feasible trial point generated 
    up to the $N$-th evaluation, and $f^*$ is determined with the instance-based choice~\cite[Definition~2.1]{G-2025-36}.
This means that for each instance, $f^*$ is defined as the lowest objective function value found by the three algorithms. 
It follows that $f^*$ can differ for instances of the same problem.  

\begin{figure}[!ht]
    \centering

    \begin{subfigure}{0.95\textwidth}
        \centering
        \includegraphics[width=\linewidth]{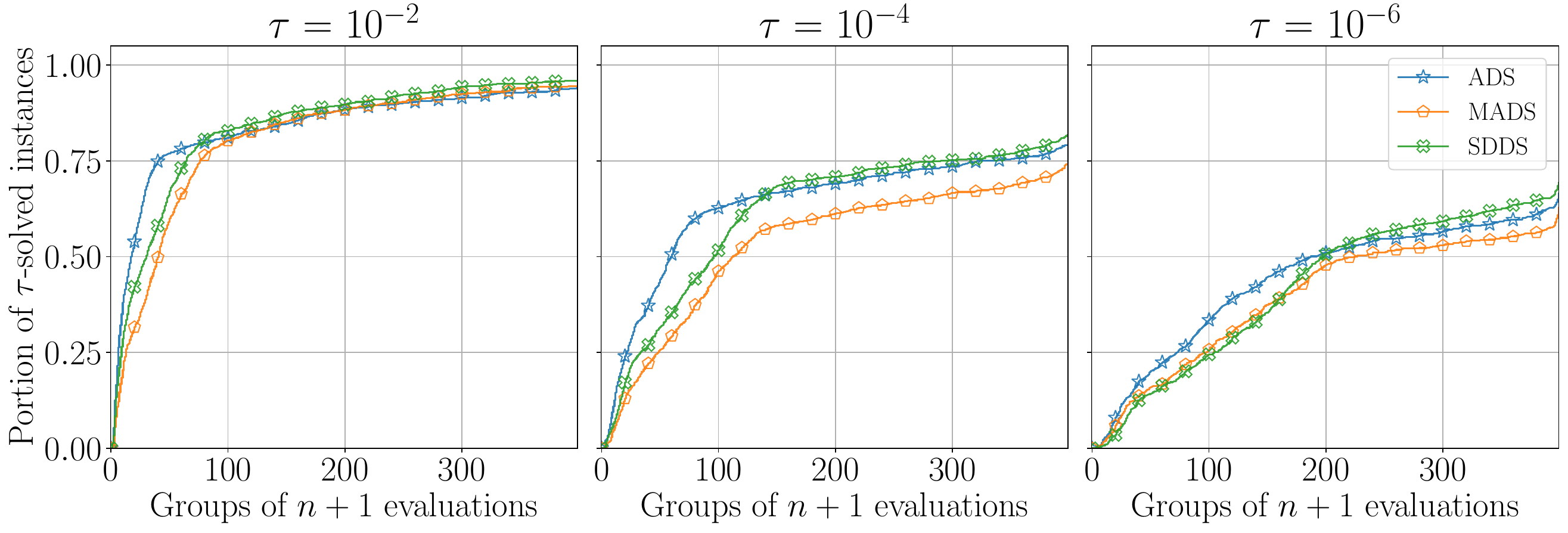}
        \caption{Smooth case.}
        \label{fig:smooth_nosearch}
    \end{subfigure}

    \vspace{4mm}

    \begin{subfigure}{0.95\textwidth}
        \centering
        \includegraphics[width=\linewidth]{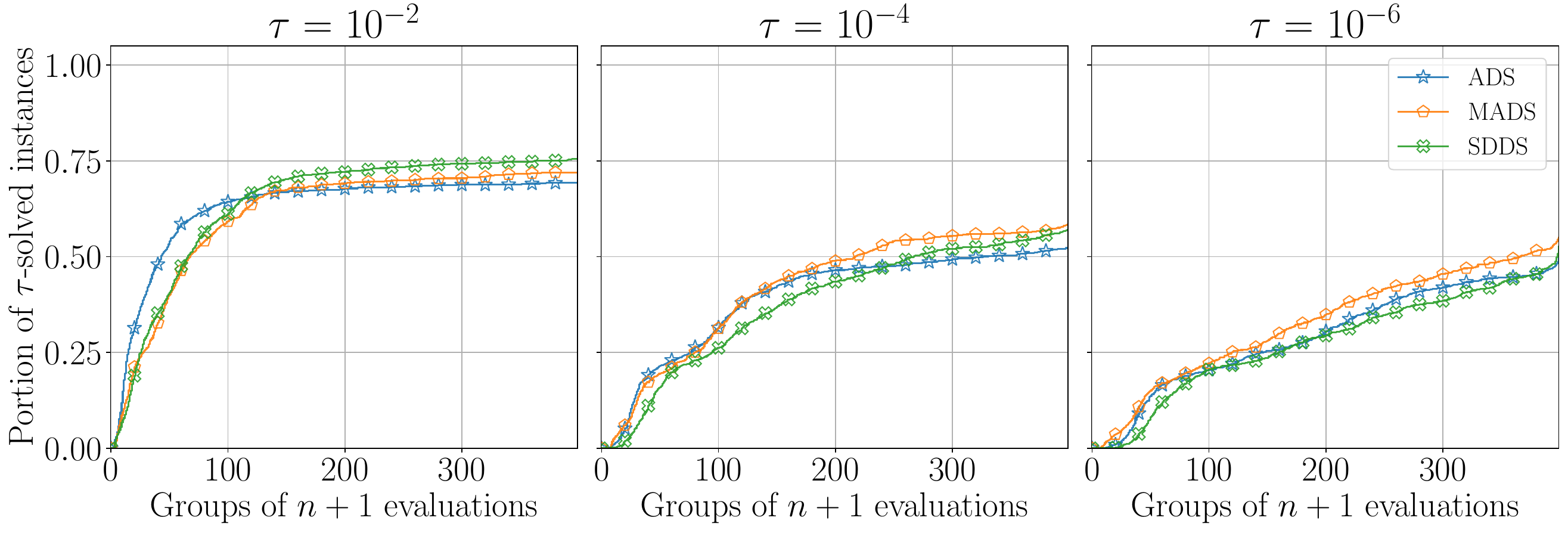}
        \caption{Nonsmooth case.}
        \label{fig:nonsmooth_nosearch}
    \end{subfigure}

    \caption{Data profiles on the $53$ \texttt{M\&W} unconstrained problems for algorithms without a search step with $20$ random seeds.}
    \label{fig:MW_no_search}
\end{figure}

On the smooth problems, all algorithms generally solve more instances than on the nonsmooth ones. Their performance is overall comparable on this set; however, in the smooth case and in the nonsmooth case with a tolerance of $\tau = 10^{-2}$, \ads{} converges faster. 
This is mainly due to the fact that some poll points are not evaluated in \ads{} when they are close to previously evaluated points, leading to a reduction in the number of function evaluations. 
Such savings rarely occurs in \sdds{} and \mads{}, as they solely rely on the cache. 
On the nonsmooth problems, the poll steps of \mads{} become more effective as the tolerance decreases.

\subsection{Constrained optimization with a quadratic search step}

Recall the example presented in Figure~\ref{fig:madscounter}, where \ads{} and \sdds{} had a similar behavior, and projecting on the mesh caused \mads{} to behave poorly.
The present section studies the effect of a quadratic search step on constrained problems.
The \texttt{CUTEst} benchmark~\cite{cutest} comprises a diverse set of constrained analytical optimization problems. 
The computational experiments involve $16$ constrained problems with feasible starting points on $20$ different seeds, as described in~\Cref{tab:CUTEst:problems}. These problems vary in dimensionality and in the number of constraints, and include cases with and without bounds on the variables. 
Table~\ref{tab:CUTEst:problems} details the characteristics of each problem, specifying the problem name, dimension ($n$), number of constraints ($m$), and the number of bounded variables. 
These problems provide a small testbed for evaluating the performance of \ads{} in constrained scenarios.
As for the \texttt{M\&W} problems, the experiences are quick to compute.

\begin{table}[!ht]
\centering
\begin{tabular}{lcccc||lccccc}
\toprule
\textbf{Problem} & \textbf{$n$} &\textbf{ $m$} &\textbf{Lower bounds} & \textbf{Upper bounds}
& \textbf{Problem} & \textbf{$n$} & \textbf{$m$} & \textbf{Lower bounds} & \textbf{Upper bounds}\\
\hline
\texttt{hs12} & 2 & 1 & 0 & 0  & \texttt{hs36}   & 3 & 1 & 3 & 3 \\
\texttt{hs24} & 2 & 3 & 2 & 0  & \texttt{hs43}   & 4 & 3 & 0 & 0 \\
\texttt{hs29} & 3 & 1 & 0 & 0  & \texttt{hs57}   & 2 & 1 & 2 & 0 \\
\texttt{hs30} & 3 & 1 & 3 & 3  & \texttt{hs76}   & 4 & 3 & 4 & 0 \\
\texttt{hs31} & 3 & 1 & 3 & 3  & \texttt{hs84}   & 5 & 6 & 5 & 5 \\
\texttt{hs33} & 3 & 2 & 3 & 1  & \texttt{hs86}   & 5 & 10 & 5 & 0 \\
\texttt{hs34} & 3 & 2 & 3 & 3  & \texttt{hs100}  & 7 & 4 & 0 & 0 \\
\texttt{hs35} & 3 & 1 & 3 & 0  & \texttt{spiral} & 3 & 2 & 0 & 0 \\
\bottomrule
\end{tabular}
\caption{A list of $16$ constrained \texttt{CUTEst} problems with a feasible starting point.}
\label{tab:CUTEst:problems}
\end{table}

\Cref{fig:CUTEst} presents data profiles on $16$ constrained problems from the \texttt{CUTEst} collection, comparing \ads{} and \mads{} performance with the use of a quadratic search. These problems are generally more challenging due to the presence of nonlinear constraints. However, the benchmark set exhibits a certain bias: for three of the sixteen problems, the solution lies at trivial symmetric values (e.g., $(2, 3)$ for \texttt{hs12} which is a global optimum), which favors \mads{}. In such cases, the initial mesh aligns well with the solution structure and offers a consistent advantage across all tolerance levels. 

\begin{figure}[!ht]
    \centering
    \begin{minipage}{0.99\textwidth}
    \includegraphics[width=\linewidth]{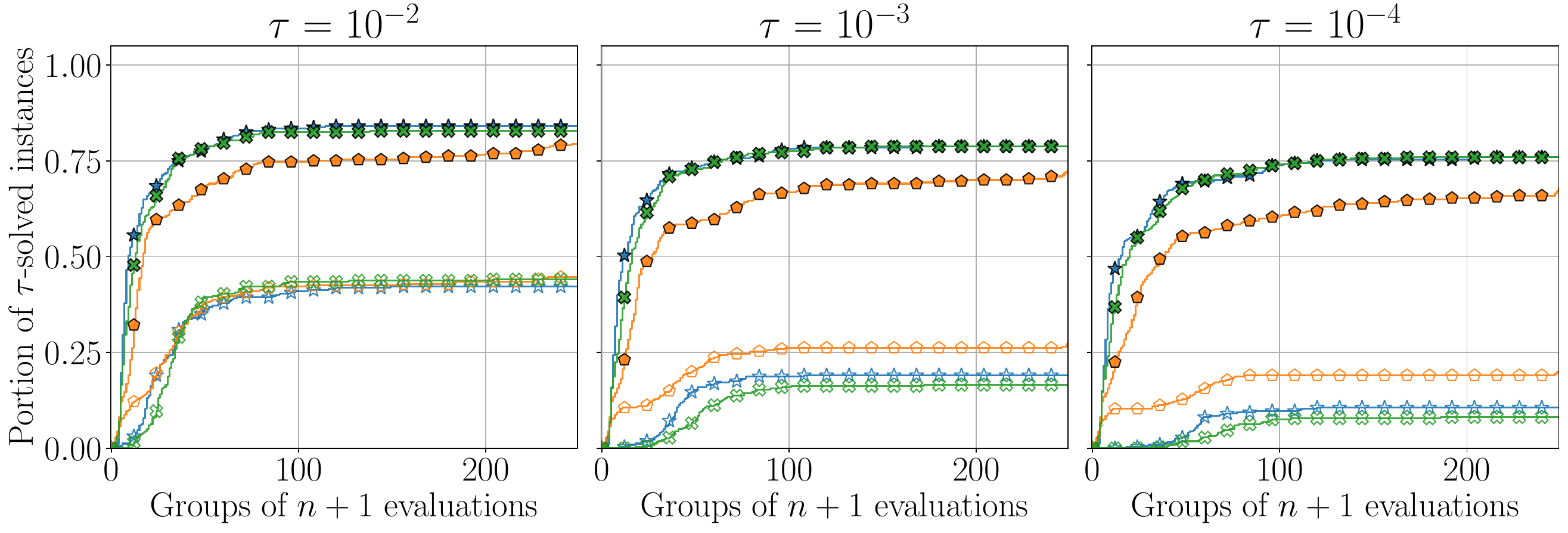}
    \centering
    \includegraphics[width=0.6\linewidth, trim=0 28pt 0 28pt, clip]{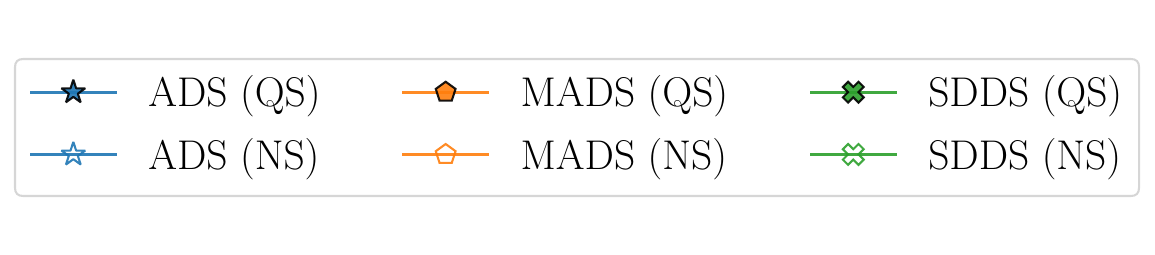}
    \hspace{-32pt}
    \end{minipage}
    \caption{Data profiles on $16$ constrained problems from \texttt{CUTEst} of \ads{}, \mads{} and \sdds{} with a quadratic search (QS) and without any search (NS) with $20$ random seeds.}
    \label{fig:CUTEst}
\end{figure}

\Cref{fig:CUTEst} also shows that both \ads{} and \sdds{} outperform \mads{}
when quadratic models of the objective and constraints are used. 
For all values of the precision $\tau$, the gain generated by the quadratic search is superior for \ads{} and \sdds{} than for \mads{}.
This improvement is mostly due to the fact that optimal solutions often lie on the boundary of the feasible region, where \mads{} may suffer from the projection on the mesh. 
Specifically, when a quadratic model suggests a candidate point near the boundary of the feasible domain, this point is generally not aligned with the mesh. To enforce the mesh constraint, \mads{} projects the candidate onto the mesh using the current mesh size parameter $\bar\deltA^k$. This projection introduces a displacement that may not decrease as fast as the accuracy of the model. As a consequence, the projected point can fall outside the feasible region $\Omega$ if it overshoots the boundary, or it may be placed too far from the boundary to capture the model’s predicted improvement. This limits \mads{}’s ability to exploit high-quality candidates near the boundary. In contrast, \ads{} and \sdds{} do not require such projection to a fixed mesh, allowing them to more accurately follow the model’s suggestions and better explore near-boundary regions.

Additionally, \ads{} may benefits from a reduction in the number of evaluations during the poll step, as illustrated in Figure~\ref{examplepoll}.
\Cref{tab:stats} presents statistics for the \texttt{CUTEst} problems.
The first two columns are self explanatory.
The third columns indicates that 
 the condition imposed by the punctured space is not restrictive,
 as on average a single search point out of $532$ is discarded.
The fourth column gives the objective function value decrease
    attributed to the search step, as a percentage.
The value for \mads{}, $23\%$, is much lower than those of \ads{} and \sdds{}.
The second to last column indicates the number of poll evaluations that were saved, due to points being outside of the punctured space, or in the cache.
On average, \ads{} saves $\frac{27}{532} \approx 5\%$  of the evaluations.
Finally, the last column lists the number of infeasible evaluations.
The largest value is for \mads{}, and this is due to the fact that projecting the points that were carefully proposed by the models, 
will often be infeasible.

\begin{table}[ht]
\centering
\vspace{0.3cm}
\begin{tabular}{p{2cm}cccc c}
\toprule
 \textbf{Algorithm} & \textbf{Total} & \textbf{Search points} & \textbf{Search} & \textbf{Poll evaluations} & \textbf{Infeasible} \\
 & \textbf{evaluations} & \textbf{not in $\esh$} & \textbf{efficiency} & \textbf{saved} & \textbf{evaluations} \\
\midrule
\ads{}  & 532  & 1  & 38\% & 27 & 49\% \\
\mads{} & 625  & -- & 23\% &  0 & 53\% \\
\sdds{} & 560  & -- & 43\% &  0 & 51\% \\
\bottomrule
\end{tabular}
\caption{Evaluation statistics for the \texttt{CUTEst} problems. The \emph{search efficiency} corresponds to the objective function value decreases (as a percentage) resulting of the search step.}
\label{tab:stats}
\end{table}

\subsection{Blackbox optimization problems}

Following the assessment of \ads{} on analytical benchmark functions, the focus now shifts to a set of realistic blackbox optimization problems. These applications, drawn from energy and engineering systems, serve to evaluate the practical efficiency of the method in settings where the objective function and the constraints are only accessible through costly simulations. Two types of problems are considered: unconstrained and constrained, allowing to assess the algorithm's behavior in both settings.

The \texttt{solar} collection~\cite{solar_paper} is a suite of ten blackbox optimization problems designed to model various subsystems of a concentrated solar power plant, including the heliostat field, central receiver, thermal storage, and power generation blocks. These problems span a range of complexities, encompassing continuous and discrete variables, varying dimensions, constraints (including hidden constraints~\cite{LedWild2015}), and even multiple fidelities. Among them, \texttt{SOLAR10} is the only instance without constraints except bounds on the variables.
This problem features a five-dimensional continuous decision space and represents a realistic blackbox model with a higher computational cost (one evaluation takes from 0.1 to 400 seconds). In our experiments, we assess algorithmic performance on \texttt{SOLAR10} using a set of 30 different initial points generated by a Latin hypercube sampling (LHS) scheme, which is provided with the \texttt{solar} package on GitHub\footnote{\url{https://github.com/bbopt/solar}}.

\Cref{fig:SOLAR_quad_search} shows data profiles for the \texttt{SOLAR10} problem. Unlike the other test sets, each optimization run of \texttt{SOLAR10} is costly. For this reason, only a single seed was used to generate the data profiles. Furthermore, only variants of the algorithms incorporating a quadratic search were considered and with a small budget of $40(n+1)=240$ evaluations. 
These computational experiments required $23$ hours with this configuration.
\ads{} demonstrates strong performance from the very beginning, efficiently leveraging the full potential of the quadratic search model and saving blackbox evaluations during the poll steps. This early advantage highlights the ability of \ads{} to rapidly exploit the search information, even in blackbox settings where evaluations are limited and expensive, \sdds{} is penalized as the budget is really restricted so the sufficient decrease criterion may reject more points.

\begin{figure}[!ht]
    \centering
    \begin{minipage}{0.99\textwidth}
    \includegraphics[width=\linewidth]{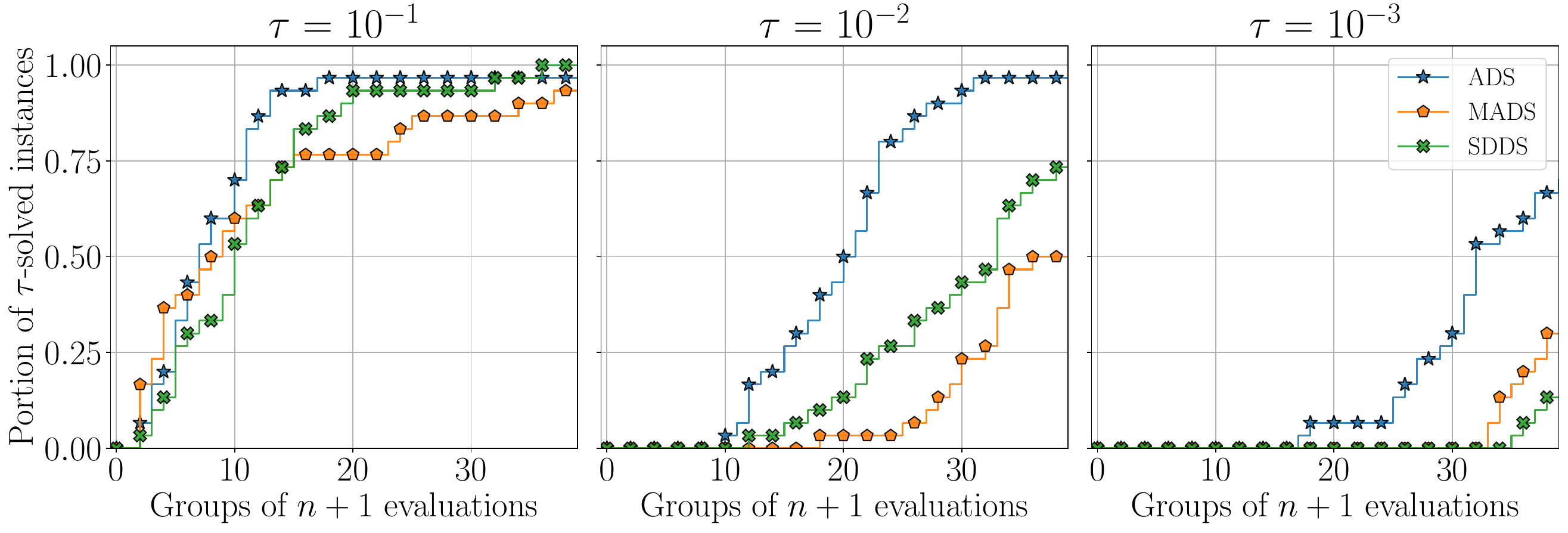}
    \end{minipage}
    \caption{Data profiles on the \texttt{SOLAR10} unconstrained problem with 30 different starting points with a quadratic search.}
    \label{fig:SOLAR_quad_search}
\end{figure}

The second blackbox is {\tt Simplified-Wing}, a MDO problem for the design of a simplified wing.
It is a constrained blackbox optimization problem of dimension $n=7$, subject to three inequality constraints and bounds on all variables. 
Evaluation of the objective and constraints involves an underlying multidisciplinary simulation, representative of a coupled engineering process such as those found in aero-structural design. Each evaluation reflects the outcome of several disciplinary analyses sharing common design variables, which makes the blackbox inherently coupled and non-transparent. To reflect the typical structure of such problems, the feasible domain includes design trade-offs across multiple components or subsystems. Due to the presence of incompatible disciplinary objectives and constraints, feasible solutions are not guaranteed for arbitrary inputs, and the initial sampling must ensure feasibility. 
We generated $30$ feasible initial points using a LHS strategy restricted to the feasible region.
These points are available on the GitHub page of \texttt{Simplified-Wing}\footnote{\url{https://github.com/bbopt/simplified_wing}}.
The evaluation budget for this problem was fixed to $150(n+1)=1200$. Each evaluation takes between $0.01$ and $3$ seconds, which makes \texttt{Simplified-Wing} a relatively fast blackbox compared to \texttt{SOLAR10}. This shorter evaluation time justifies the higher budget allowed.

\Cref{fig:MDO_quad_search} shows the data profiles for the \texttt{Simplified-Wing} problem. Among the tested methods, \ads{} clearly outperforms both \mads{} and \sdds{} across all accuracy levels. This performance is largely due to the efficiency of the quadratic search step, which leverages models of both the objective and constraints. In the context of constrained optimization, where optimal solutions frequently lie on the boundary of the feasible set, exploiting as much information as possible from the models is crucial. 

\begin{figure}[!ht]
    \centering
    \begin{minipage}{0.99\textwidth}
    \includegraphics[width=\linewidth]{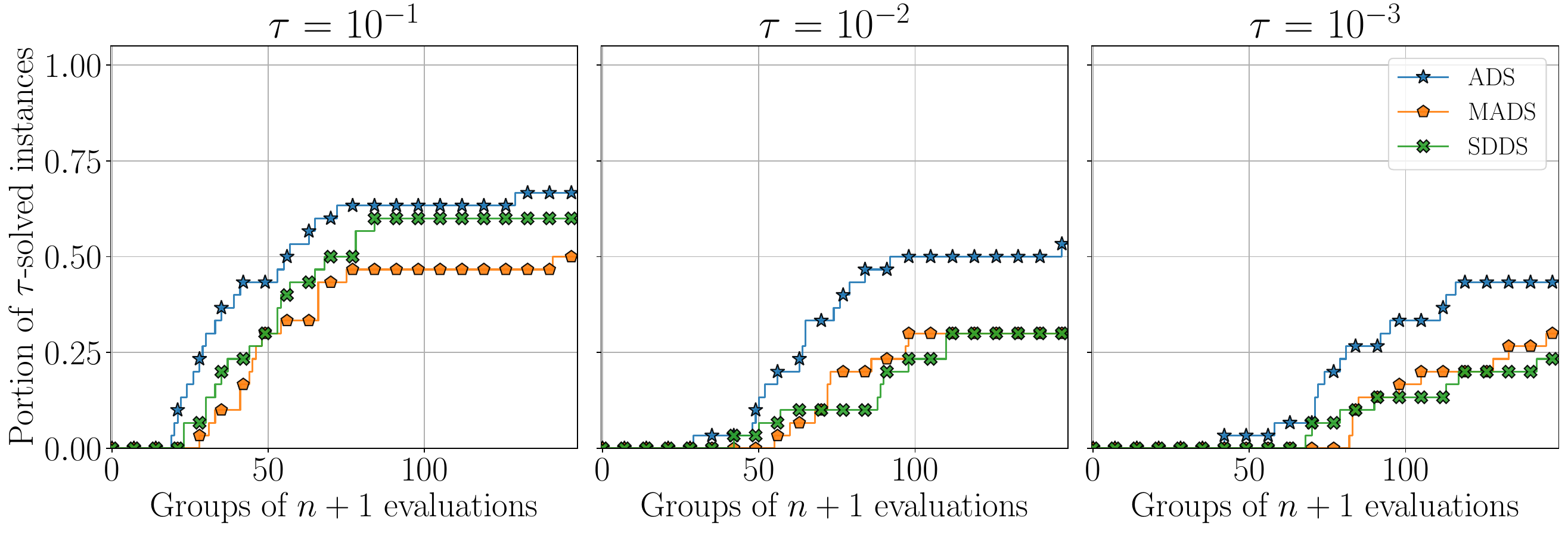}
    \end{minipage}
    
    \caption{Data profiles for \texttt{Simplified-Wing} with 30 different starting points with a quadratic search.}
    \label{fig:MDO_quad_search}
\end{figure}

\section{Discussion}
This work introduces the \ads{} class of derivative-free DDS algorithms.
The main motivation is to propose an algorithm that inherits the simple decrease acceptance criteria of mesh-based methods, while retaining the flexibility in the placement of trial points of sufficient decrease methods. \ads{} achieves this by replacing the mesh by a much larger set called the punctured space, which excludes points that are close to previously evaluated ones.
This approach allows exploring the space of variable efficiently while maintaining convergence properties. \ads{} generalizes established direct search methods such as \orthomads{} and \qrmads{}, positioning them as specific cases within its broader algorithmic structure. 
Comprehensive computational experiments conducted on the \texttt{M\&W} and \texttt{CUTEst} collections, and on the \texttt{SOLAR10} and \texttt{Simplified-Wing} real-world problems, highlight the strengths of \ads{}. These tests show that quadratic models within the search step significantly improve the optimization process, and that \ads{} outperforms $\mads{}$ and $\sdds{}$ on the two real engineering blackbox problems.

Future work includes integrating \ads{} with the progressive barrier approach~\cite{AuDe09a} to enhance its ability to manage relaxable constraints~\cite{LedWild2015}. Another research avenue is the adaptation of existing search strategies~\cite{AuTr2018,AuBeLe08} within \ads{}, and examining the impact on convergence speed and robustness. 
Finally, \ads{} will be integrated into the \nomad solver~\cite{nomad4paper}, inheriting the advanced tools, constraint handling mechanisms, variable types, and user interfaces already available for \mads{}.

\section*{Declarations}

\subsection*{Funding and/or Conflicts of interests/Competing interests}

The authors declare that they have no conflicts of interest or competing interests.
This work is supported by the NSERC Discovery Grants RGPIN-
2020-04448 (Audet), 2024-05093 (Diouane) and 2024-05086 (Le Digabel).

\bibliographystyle{plainnat}
\small
\bibliography{bibliography}

\pdfbookmark[1]{References}{sec-refs}
\label{sec-refs}

\end{document}